





\documentclass[pdflatex,sn-mathphys]{sn-jnl}



\jyear{2021}%

\theoremstyle{thmstyleone}%
\newtheorem{theorem}{Theorem}
\newtheorem{proposition}[theorem]{Proposition}%
\newtheorem{lemma}[theorem]{Lemma}
\usepackage{setspace,amsmath}

\theoremstyle{thmstyletwo}%
\newtheorem{example}{Example}%
\newtheorem{remark}{Remark}%

\theoremstyle{thmstylethree}%
\newtheorem{definition}{Definition}%

\raggedbottom

\begin{document}

\title[Bilevel Programming Problems: A view through Set-valued Optimization]{Bilevel Programming Problems: A view through Set-valued Optimization }


\author*[1]{\fnm{Kuntal} \sur{Som}}\email{kuntal24.som@gmail.com}


\author[2]{\fnm{Thirumulanathan} \sur{D.}}\email{nathan@iitk.ac.in}
\equalcont{These authors contributed equally to this work.}

\author[2]{\fnm{Joydeep} \sur{Dutta}}\email{jdutta@iitk.ac.in}
\equalcont{These authors contributed equally to this work.}

\affil[1]{\orgdiv{Department of Mathematics}, \orgname{IIT Jodhpur}, \orgaddress{\street{NH62, Surpura Bypass Road, Karwar }, \city{Jodhpur}, \postcode{342030}, \state{Rajasthan}, \country{India}}}

\affil[2]{\orgdiv{Department of Economic Sciences}, \orgname{IIT Kanpur}, \orgaddress{\street{Kalyanpur}, \city{Kanpur}, \postcode{208016}, \state{Uttar Pradesh}, \country{India}}}



\abstract{Bilevel programming is one of the very active areas of research with many real-life applications in economics and engineering. Bilevel problems are hierarchical problems consisting of lower-level and upper-level problems, respectively. The leader or the decision-maker for the upper-level problem decides first, and then the follower or the lower-level decision-maker chooses his/her strategy. In the case of multiple lower-level solutions, the bilevel problems are not well defined, and there are many ways to handle such a situation. One standard way is to put restrictions on the lower level problems (like strict convexity) so that nonuniqueness does not arise. However, those restrictions are not viable in many situations. Therefore, there are two standard formulations, called pessimistic formulations and optimistic formulations of the upper-level problem (see \cite{Dempe2002foundation}). A set-valued formulation has been proposed in the paper \cite{Dutta2017optimalityofbilevelthroughvariational} and has been studied in the literature (see \cite{Zemkohoillposed,pileckathesis}). However, the study is limited to the continuous set-up with the assumption of value attainment, and the general case has not been considered. In this paper, we focus on the general case and study the connection among various notions of solution. Our main findings suggest that the set-valued formulation may not hold any bigger advantage than the existing optimistic and pessimistic formulation. }

\keywords{Bilevel programming, Set-valued optimization, Optimistic formulation, Pessimistic formulation, $l$-minimal solution, $u$-minimal solution}



\maketitle

\section{Introduction}
\label{intro}
Bilevel programming is a very active area of research. It originated in the work of Stackelberg \cite{stackelberg1952theory} in the 1930s and is still very popular with lots of applications. One can have a look at \cite{Bard,Bard1988,dempe1992,Dempe2002foundation,dempe2006dutta,Dempe2012bilevelandcomplimentary}. Very briefly, the bilevel programming can be described as follows. There are two agents; one is called the leader, while the other is called the follower. The leader provides some input to the follower, and the follower then minimizes or maximizes his/her objective by considering the leader's input as a parameter. Therefore, the leader has to choose an input so as to minimize or maximize his/her objective by taking into consideration the follower's optimal choices. So there is a hierarchy: there are two levels of problems, and these are named lower-level and upper-level problems, respectively. The key challenge in bilevel programming is that it is an intrinsically nonsmooth and nonconvex optimization problem. In the case of multiple lower-level solutions, the bilevel problems are not well defined, and two standard formulations in terms of optimistic and pessimistic scenarios have been proposed to handle ambiguity in such cases. A set-valued formulation has been proposed in \cite{Dempedutta2006bilevelwithconvexlower} (also see \cite{Dutta2017optimalityofbilevelthroughvariational}), and has been further studied in \cite{pileckathesis,Zemkohoillposed} in the continuous setup. However, the general setup has not been studied yet.  

On the other hand, set-valued optimization is a very active area of research. Originating in the late 1970s, it has been growing to date with many diverse applications in different fields of engineering and sciences \cite{borwein1977multivalued,corley1988optimality,DK4,Akhtar,Gofert}. Initially, a solution concept called the `vector approach' to a set-valued optimization problem was prominent \cite{borwein1977multivalued,corley1987existence,corley1988optimality,Luc89}. However, some shortcomings of this approach were pointed out in the late 1990s by Kuroiwa et al. \cite{DK}. They initiated and popularized the `set-relation approach' of solutions to a set-valued optimization problem \cite{KurTanTru97, DK4}. Since then, research in set-valued optimization in the set-relation approach has been booming. Set-valued optimization has been successfully used as an explanatory tool for games with multi-objective payoffs in \cite{H}. In this paper, we focus on the set-valued optimization approach to explain bilevel programming problems. 

The structure of the paper is as follows. In Section \ref{prelim}, we recall some basic notions of bilevel programming and set-valued optimization. In Section \ref{scalar connection}, we consider the set-valued formulation of bilevel programming as proposed in \cite{Dempedutta2006bilevelwithconvexlower,Dutta2017optimalityofbilevelthroughvariational} and show how set-valued solutions are related to existing notions of solutions for bilevel programming problems. We conclude the paper in Section \ref{conclusion}. 

\section{Preliminaries}
\label{prelim}
We start by recalling some basics of bilevel programming. Bilevel programming, or in general multilevel programming, is concerned with the hierarchy of decisions (see \cite{Bard,Bard1988,Dempe2002foundation}). Typically, in bilevel programming, there are two levels of decision-making and two decision-makers, the leader and the follower, whose intentions are to optimally choose their decisions. The bilevel problem is the leader's problem. In this paper, we shall consider the following bilevel programming problem:
\begin{gather}\label{bilevel real upper}
    \underset{x}{\text{`minimize'}} \; F(x,y) \\ \text{subject to } x\in X, y\in \psi(x), \nonumber
\end{gather}
where, $\mathcal{X},\mathcal{Y}$ are two real normed linear spaces, $X\subseteq \mathcal{X}$ is a nonempty subset, $F: \mathcal{X} \times \mathcal{Y} \rightarrow \mathbb{R}$ is the leader's objective function, and for each $x\in X$,  $\psi(x):= \underset{y}{\arg\;\min} \lbrace f(x,y) \mid y\in K(x) \rbrace$ denotes the global solution set of the lower-level problem parametrized by $x$, which is given as 
 
\begin{gather}\label{bilevel lower}
    \underset{y}{\text{minimize}} \;f(x,y) \\ \text{subject to } y\in K(x) \nonumber,
\end{gather}
where for each $x\in X$, $K(x)$ is the constraint set of the lower-level player, which is influenced by the choice of the leader's decision $x$.
It must be mentioned that bilevel problems are very hard to solve, even in finite dimensions. First of all, the lower-level problem needs to be solved globally for each of the upper-level decisions $x$, which is a daunting task. Mostly, it is assumed that the function $y\mapsto f(x,y)$ is convex in $y$ for each $x\in X$. However, some nonconvex problems have also been considered in the literature, for example, the famous Mirrlees's problem (see \cite{mirrlees1999theory}).  As customary, we assume here that $\psi(x)$ is nonempty and fully computable for each $x\in X$; that is, the parametrized lower-level problem is fully globally solvable. Now if for each $x\in X$, $\psi(x)$ is singleton, say, $\psi(x)=\lbrace y(x) \rbrace$, then the upper-level problem reduces to the following problem:
\begin{gather}
    \underset{x}{\text{minimize}} \; F(x,y(x)) \\ \text{subject to } x\in X.  \nonumber
\end{gather}
However, unless the solution sets $\psi(x)$ of the lower-level problems are singleton, which is more common a case, problem (\ref{bilevel real upper}) is not well-defined, and that is why the quotation mark has been used in the initial formulation. There are many ways to avoid this ambiguity. Two such ways give rise to what is known as optimistic and pessimistic reformulation. In the optimistic scenario, the upper-level player expects cooperation from the lower-level player, and in this case, the bilevel problem/upper-level problem is reformulated as the following (see \cite{Dempe2002foundation}):
\begin{gather}\label{bilevel optimistic}
    \underset{x}{\text{minimize}} \; F_o(x) \\ \text{subject to } x\in X, \nonumber
\end{gather}
where $F_o(x)=\underset{y \in \psi(x)}{\inf}F(x,y)$.

However, cooperation from the lower-level player may not always model real-life situations, and the upper-level player may have to prepare for the worst case. This is the typical reasoning by game theorists, and this reasoning implies reformulating the upper-level problem for the leader as follows (see \cite{Leitmann}):
\begin{gather}\label{bilevel pessimistic}
    \underset{x}{\text{minimize}} \; F_p(x) \\ \text{subject to } x\in X, \nonumber
\end{gather}
where $F_p(x)=\underset{y \in \psi(x)}{\sup}F(x,y)$.

 Note that, in formulations of (\ref{bilevel optimistic}) and (\ref{bilevel pessimistic}), the functions $F_o$ and $F_p$ are infimal and supremal function values of the upper-level objective function over the solution set of the lower level parametrized problem. Here, it is not demanded that those values must be attained. But in many papers, this assumption is taken by default, and the infimum and supremum are replaced by minimum and maximum, respectively. However, here, we do not, in general, assume that the values are attained. Attainment will be specified when necessary, and we shall call those situations as value attainment situations. By value attainment, we shall mean both of the following situations: value attainment in the inner minimization in the optimistic formulation and value attainment in the inner maximization in the pessimistic formulation; and which one is being referred to will be clear from the context. Whereas pessimistic formulation is more common in the game theory literature, the optimistic formulation has interested mathematical optimizers because it is simpler to handle in comparison to the pessimistic formulation. In fact, more research has been devoted to optimistic formulation because of its close connection with the following (and relatively more tractable) formulation:
 \begin{gather}\label{bilevel standard upper}
    \underset{x,y}{\text{minimize}} \; F(x,y) \\ \text{subject to } x\in X, y\in \psi(x). \nonumber
\end{gather}
Note that in the above formulation, we need to minimize the upper-level objective with respect to both leaders' and followers' decisions. Though formulation (\ref{bilevel optimistic}) is closely related to formulation (\ref{bilevel standard upper}), they are not equivalent (though many articles claim to be so) as has been shown in \cite{Dempedutta2006bilevelwithconvexlower}. It is a widely popular belief that in terms of global solutions, both formulations are equivalent. However, that may not be the case unless the value in the inner minimization in the formulation (\ref{bilevel optimistic}) is attained.  It also should be noted that problems (\ref{bilevel optimistic}) and (\ref{bilevel standard upper}) are nonconvex problems because of the presence of the set $\psi(x)$, and hence we must focus largely on local solutions. It has been established that in the sense of local solutions, the two optimistic formulations are not equivalent. Problem formulation (\ref{bilevel optimistic}) appears more reasonable because, as an upper-level decision maker, the value only makes more sense than its attainment over the lower-level player's solution set. However, formulation (\ref{bilevel standard upper}) seems to be more tractable. Therefore, in the literature, formulation (\ref{bilevel optimistic}) is sometimes called a `real optimistic' upper-level problem, whereas formulation (\ref{bilevel standard upper}) is referred to as a `standard optimistic' upper-level problem. Let us now see some existing notions of solutions for bilevel programming and a few examples. 

\begin{definition}[\cite{Dempedutta2006bilevelwithconvexlower}]
A point $x^*\in X$ is called a global real optimistic solution to the problem (\ref{bilevel real upper}) if $F_o(x) \geq F_o(x^*)$ for all $x\in X$.
\end{definition}

\begin{definition}
A point $x^*\in X$ is called a local real optimistic solution to the problem (\ref{bilevel real upper}) if there exists an $\epsilon>0$ such that $F_o(x) \geq F_o(x^*)$ for all $x\in X$ with $\| x-x^* \|< \epsilon$.
\end{definition}

\begin{definition}
A point $(x^*,y^*)$ is called a global standard optimistic solution to the problem (\ref{bilevel real upper}) if $x^*\in X$, $y^*\in \psi(x^*)$ and  $F(x,y) \geq F(x^*,y^*)$ for all $x\in X$ and $(x,y)\in Graph(\psi)$.
\end{definition}

\begin{definition}\label{onedefinition}
A point $x^*\in X$ is called a local standard optimistic solution to the problem (\ref{bilevel real upper}) if there exists an $\epsilon>0$ such that $F(x,y) \geq F(x^*,y^*)$ for all $x\in X$ and $(x,y)\in Graph(\psi)$ with $\| (x,y)-(x^*,y^*)\|< \epsilon$.
\end{definition}
In the above definition, we have used a norm in the product space $\mathcal{X}\times \mathcal{Y}$, which we, by convention, consider as follows: $\|(x,y)\|_{\mathcal{X}\times \mathcal{Y}} = \|x\|_{\mathcal{X}} + \|y\|_{ \mathcal{Y}}$.
\begin{definition}
A point $x^*\in X$ is called a global real pessimistic solution to the problem (\ref{bilevel real upper}) if $F_p(x) \geq F_p(x^*)$ for all $x\in X$.
\end{definition}

\begin{definition}
A point $x^*\in X$ is called a local real pessimistic solution to the problem (\ref{bilevel real upper}) if there exists an $\epsilon>0$ such that $F_p(x) \geq F_p(x^*)$ for all $x\in X$ with $\| x-x^*\|< \epsilon$.
\end{definition}

Now, we provide some examples of bilevel programming. The first example is a special case of Stackelberg's game. To model market economy, Stackelberg first introduced the notion of the Stackelberg game in his monograph \cite{stackelberg1952theory}. Stackelberg's game is a special case of bilevel programming problems with the uniqueness of the solution in the lower-level problem. The notion of generalized Stackelberg game and generalized Stackelberg equilibrium has been introduced in \cite{Leitmann}. 
\begin{example}
Let there be two firms, say Firm 1 and Firm 2, who produce one identical product in the quantities $x_1$ and $x_2$, respectively. Assume that the unit production cost is simply $c$ for each firm. However, the price of the product in the market is determined by the total production, and let the unit price function be $p(x)=[\alpha-x]_+:=\max \{\alpha-x,0\}$. The value of $\alpha$ denotes the upper bound on the total production of two firms, beyond which the effective price of the product becomes $0$. Function $p(\cdot)$ is more commonly known as the inverse demand function. It is assumed that $\alpha>c$. Each firm's objective is to minimize the loss (or maximize the profit). However, Firm 1 acts as a leader and has the advantage that it first declares its production plan, and then only Firm 2 gets to decide its own production plan. Because of this hierarchy, Firm 2 solves the following problem:
   \begin{gather*}
        \underset{x_2}{\text{minimize}}\; f(x_1,x_2)=cx_2-x_2p(x_1+x_2)\\ \text{subject to } x_2\geq 0.  \nonumber
   \end{gather*}
   For this follower's problem, $x_1$ works as a parameter, which is supplied by the leader or Firm 1. The leader's problem then looks like
   \begin{align*}
        \underset{x_1}{\text{minimize}}\; &F(x_1,x_2)=cx_1-x_1p(x_1+x_2) \\
        \text{ subject to }&\text{ (i) } x_1\geq 0\\ &\text{ (ii) }x_2\in \underset{x_2}{\arg\;\min}\lbrace cx_2-x_2p(x_1+x_2) \mid x_2\geq 0 \rbrace.
   \end{align*}
   For the particular choice of the price function $p(x)=[\alpha-x]_+$, no one would produce more than $\alpha$ quantity, and that is an implicit constraint. The lower-level objective function takes the following form:
   \begin{gather*}
       f(x_1,x_2)=\begin{cases}
        (c-\alpha+x_1)x_2+x_2^2 & \text{if } x_2\leq \alpha-x_1\\
        cx_2 & \text{if } x_2> \alpha-x_1
    \end{cases}.
   \end{gather*}
This can be solved globally and achieves its minimum at a unique point depending on the values of $x_1$. The solution set map is as follows:
\begin{gather*}
       \psi(x_1)=\begin{cases}
        \left\lbrace \frac{(\alpha-c-x_1)}{2} \right\rbrace & \text{if } 0\leq x_1 \leq \alpha-c\\
        \lbrace 0 \rbrace & \text{if } x_1> \alpha-c
    \end{cases}.
   \end{gather*}
Plugging these values in the upper-level problem, the reduced upper-level problem becomes
   \begin{gather*}
        \underset{x_1}{\text{minimize}}\; F(x_1,\psi(x_1))=cx_1-x_1\left(\alpha-x_1-\frac{\alpha-c-x_1}{2}\right)\\ \text{subject to } x_1\geq 0.  \nonumber
   \end{gather*}
This problem can also be solved globally and has a unique global minimum at $(\frac{\alpha-c}{2})$. Thus $(\frac{\alpha-c}{2},\frac{\alpha-c}{4})$ is the global standard optimistic as well as $(\frac{\alpha-c}{2})$ is the global real optimistic and global real pessimistic solution of the bilevel problem. 
\end{example}

\begin{example}
The second example is of an extensive form game. There is a lot of literature on extensive form games; one may look at \cite[Ch.~6]{Osbornegametheorybook1994} for a quick primer. Extensive-form games are usually represented by game trees. There is a root vertex, which is the starting point of the game. Each of the terminal vertices represents an outcome of the game and consists of a tuple of payoffs (here, we will write costs and minimize them) for all the players. Each of the non-terminal vertices represents a game situation. The most interesting question about any game is on how to find the equilibrium profile of the game. For extensive form games, the most popular notion of equilibrium is that of a subgame perfect Nash equilibrium.  A strategy profile is called a subgame perfect Nash equilibrium if it is a Nash equilibrium of every subgame of the original game. We shall illustrate it for a two-player extensive form game of length two.

Consider an extensive form game of two players of length two. Let $X$ and $Y=\underset{x\in X}{\bigcup}Y(x)$ denote all possible moves that the first player and the second player can play, respectively. A pair $(x,f)$ where $x\in X$ and $f:X \rightarrow Y$ with $f(x)\in Y(x)$ is called a subgame perfect Nash equilibrium if no player wants to deviate given the other one stick to his/her choice. As an example, let the first player have only two strategies, say $A$ and $B$. When the first player plays $A$, the second player, suppose, has only two strategies, say $C$ and $D$. When the first player plays $B$, the second player, suppose again, has only two strategies, say $E$ and $F$. Then, the graph in Figure \ref{fig:game 1} represents the game tree of this extensive form game.
    
    \begin{figure}[ht]
        \centering
        \includegraphics{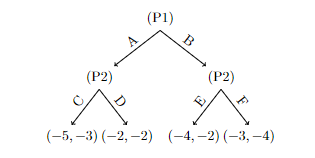}
        \caption{A game with a unique subgame perfect Nash equilibrium.}
        \label{fig:game 1}
    \end{figure}
    
Since we have assumed it to be a game of length two, the game ends here, and each terminal node corresponds to a pair of costs for both players. Each player wants to minimize his/her cost. Using the backward induction method, it can be seen that the strategy profile $\left(A, \begin{pmatrix} A\rightarrow C \\ B\rightarrow F \end{pmatrix}\right)$ is a subgame perfect Nash equilibrium. Interestingly, this decision problem can be formulated in the language of bilevel programming. Here, it can be seen that for each of the leader's decisions ($A$ and $B$), the follower uniquely achieves his/her minimum cost ($C$ for $A$ and $F$ for $B$). And out of $A$ and $B$, $A$ fetches the leader a lesser cost. In the language of bilevel programming, it means that $(A,C)$ is a global standard optimistic solution (as well as $A$ is a global real optimistic and global real pessimistic solution). Thus, the solutions of the bilevel formulation are connected to the notion of subgame perfect Nash equilibrium. In fact, when each player's strategy sets are finite, it can be shown that each of the global real optimistic (equivalently global standard optimistic because the value is attained because of finiteness) solutions and global real pessimistic solutions correspond to some subgame perfect Nash equilibrium. For example, consider the following extensive form game represented in Figure \ref{fig:game 2}.
\begin{figure}[!ht]
    \centering
    \includegraphics{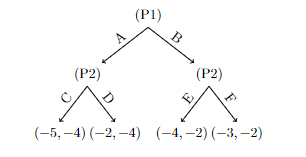}
    \caption{A game with multiple subgame perfect Nash equilibria.}
    \label{fig:game 2}
\end{figure}

Here again, we have assumed that the players have the same strategies as in the previous game, but with different costs. In this game, the second player's alternative choices incur the same cost to him/her for each of the first player's choices. But his/her choice affects the first player's costs. Here it can be seen that $\left(A, \begin{pmatrix} A\rightarrow C \\ B\rightarrow E \end{pmatrix}\right)$ and $\left(B, \begin{pmatrix} A\rightarrow D \\ B\rightarrow F \end{pmatrix}\right)$ are both subgame perfect Nash equilibria. Also, $(A,C)$ is a global standard optimistic solution, and $B$ is a global real pessimistic solution of the bilevel formulation. This shows that there is a close relation between subgame perfect equilibrium strategies and solutions of the bilevel formulation. But not all the subgame perfect Nash equilibrium may have a correspondence with some real optimistic or real pessimistic solution of the bilevel formulation. For example, consider the following game with the game tree given by:

\begin{figure}[ht]
    \centering
    \includegraphics{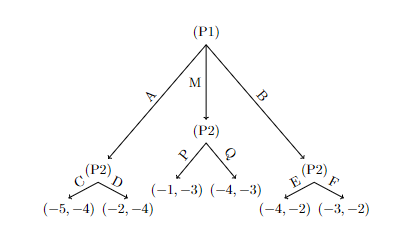}
    \caption{A game that has a subgame perfect Nash equilibrium that does not correspond to any real optimistic or pessimistic solutions.}
    \label{fig:game 3}
\end{figure}

In this case, it can be seen that though $\left(M, \begin{pmatrix} A\rightarrow D \\ B\rightarrow E \\M \rightarrow Q \end{pmatrix}\right)$ is a subgame perfect Nash equilibrium, $M$ does not correspond to any of the real optimistic or pessimistic solutions of the bilevel formulation. In fact, $A$ is the global real optimistic solution (correspondingly $(A,C)$ is a global standard optimistic solution), and $B$ is the global real pessimistic solution of the bilevel formulation.

When each player's strategy set becomes infinite, it can still be proved that every global standard optimistic solution will correspond to a subgame perfect Nash equilibrium. But for global real optimistic and real pessimistic solutions to correspond to some subgame perfect Nash equilibrium, one would need the value attainment assumption. For example, consider a two-person extensive form game of length two whose corresponding bilevel problem is given as follows:

 \begin{gather}
    \underset{x}{\text{`minimize'}} \; F(x,y)=x-y \\ \text{subject to } x\in [0,1], y\in \psi(x). \nonumber
\end{gather}

 \begin{gather}
    \underset{y}{\text{minimize}} \;f(x,y)=\begin{cases}
        \lfloor{y}\rfloor & \text{if } x=0\\
        -xy & \text{if } x >0
    \end{cases} \\ \text{subject to } y\in [0,1].  \nonumber
\end{gather}
Here, $\lfloor.\rfloor$ is the standard greatest integer function or the floor function. The lower-level solution set can be calculated to:
$$ \psi(x)=\begin{cases}
    [0,1) & \text{if } x=0\\
    \lbrace 1 \rbrace & \text{if } 0<x\leq 1
\end{cases}.$$
Optimistic upper-level objective function $F_o(x)$ turns out to be
$$ F_o(x)=x-1\;\;\text{for all }x\in[0,1].$$
Thus, for this problem, $x=0$ is a global real optimistic solution. However, $0$ does not correspond to any subgame perfect Nash equilibrium. In fact, it can be shown that this game does not have any subgame perfect Nash equilibrium.
\end{example}

\begin{example}
We now provide another example from the field of robust optimization. Robust optimization is one of the prominent ways to handle uncertain optimization problems. Consider the following uncertain scalar optimization problem (see \cite{BNT})
 \begin{gather} \label{scalarform}
  \underset{x}{\text{minimize }} \;\phi(x,\xi) \\ \text{subject to } x \in S, \nonumber
\end{gather}
where $\phi: \mathbb{R}^n \times U \rightarrow \mathbb{R}$ is a map, $S \subseteq \mathbb{R}^n$ is a known constraint/feasible set, and $U \subseteq \mathbb{R}^k$ is the set of uncertain scenarios. Here, we have assumed that the uncertainty parameter only appears in the objective. Problem (\ref{scalarform}) in itself is not a well-defined problem. When we fix one $\xi \in U$, this is a simple scalar optimization problem, and hence problem (\ref{scalarform}) can be viewed as a collection of scalar optimization problems $\lbrace P(\xi): \xi \in U \rbrace$, where, for each $\xi \in U$, problem $P(\xi)$ is given as:
\begin{equation}
    \tag{$P(\xi)$}
    \begin{gathered}
        \underset{x}{\text{minimize }} \;\phi(x,\xi)\\ \text{subject  to } x \in S.
    \end{gathered}
\end{equation}

There are multiple ways to define robust solutions for the problem (\ref{scalarform}) based on different ways of interpreting the uncertainty in the solution concept. The scenario-based interpretation gives rise to notions of \text{`highly'} and \text{`flimsily'} robust solutions. However, the notion that is most popular and most researched is the notion of \textit{`min-max robust'} solution (also known as {worst case robust solution} or {strict robust solution} or simply {robust} solution in the literature). The min-max robust solution is defined through the solution of the following so-called \text{robust counterpart}: 
\begin{gather} \label{scalarformcounterpart}
  \underset{x}{\text{minimize }}~ \left(\underset{\xi \in U}{\sup}\;\phi(x,\xi) \right) \\ \text{subject to } x \in S. \nonumber
  \end{gather}
  Note that the formulation (\ref{scalarformcounterpart}) is a single problem arising out of the family of problems $\lbrace P(\xi): \xi \in U \rbrace$. As the formulation suggests, it considers the worst case that can happen due to data uncertainty and then tries to minimize the worst-case objective value. In terms of the solution to this associated problem, the min-max robust solution for the problem (\ref{scalarform}) is defined as follows (see \cite{BNT}):
      A point $x^0 \in S$ is called a \textit{min-max} robust solution to the problem (\ref{scalarform}) if it is an optimal solution of the problem (\ref{scalarformcounterpart}), that is, \begin{center}
          
      $\underset{\xi \in U}{\sup}\;\phi(x^0,\xi) \leq \underset{\xi \in U}{\sup}\;\phi(x,\xi)$ for all $x \in S$.  \end{center}
  
While {min-max robustness} incorporates the pessimistic way to look at the uncertainty, the optimistic interpretation leads to the concept of {optimistic robustness} (see \cite{Beck,Klamroth17}). This also appears when one tries to formulate the dual of the min-max formulation (see \cite{Beck}). Corresponding to the problem (\ref{scalarform}), consider the following counterpart:
 \begin{gather} \label{scalaroptimisticformcounterpart}
  \underset{x}{\text{minimize }}~ \left(\underset{\xi \in U}{\inf}\;\phi(x,\xi) \right) \\ \text{subject to } x \in S. \nonumber
\end{gather}
A point $x^0 \in S$ is called an \textit{optimistic robust} solution to the problem (\ref{scalarform}) if it is an optimal solution of the problem (\ref{scalaroptimisticformcounterpart}), that is, \begin{center}
    $\underset{\xi \in U}{\inf}\;\phi(x^0,\xi) \leq \underset{\xi \in U}{\inf}\;\phi(x,\xi)$ for all $x \in S$. \end{center} 

What we show now is that we can think about these robust solutions as solutions to some bilevel programming problems. Consider the following bilevel optimization problem corresponding to the problem (\ref{scalarform}).

The lower-level problem  as:
\begin{gather}\label{bilevel lower corresponding to robust}
    \underset{\xi}{\text{minimize }} \;\hat{f}(x,\xi) \\ \text{subject to } \xi \in  U, \nonumber
\end{gather}
where \begin{center}
    
$\hat{f}(x,\xi)= \left\{
  \begin{array}{lll} 
      0 & & \text{ if }(x,\xi) \in S\times U \\
      1 & & \text{ otherwise} 
      \end{array}
\right.
$
\end{center}
and the upper-level problem :
\begin{gather}\label{bilevel optimistic corresponding to robust}
    \underset{x}{\text{minimize }} \; \phi(x,\xi) \\ \text{subject to } x\in S. \nonumber
\end{gather}

Note that in the above formulation, the lower-level problem is actually a dummy problem that returns the solution set $\psi(x)$ as $U$ for every $x\in X$. Also, this is a problem that does not have a unique lower-level solution for each of the upper-level player's choices.  Now, it can be seen that every optimistic robust solution to this problem is a global real optimistic solution to the bilevel problem (\ref{bilevel optimistic corresponding to robust}) and vice versa. Similarly, every min-max robust solution of problem (\ref{scalarform}) is a global real pessimistic solution to the bilevel problem (\ref{bilevel optimistic corresponding to robust}) and vice versa.

In case the inner supremum and infimum as in problems (\ref{scalarformcounterpart}) and (\ref{scalaroptimisticformcounterpart}), respectively, are attained for every $x\in S$ (which can happen in many ways, for example, if the function $\phi(x,\xi)$ is continuous in $\xi$ and the uncertainty set $U$ is compact), alternative bilevel problems can be formulated whose solution will correspond to min-max and optimistic robust solution. For the problem (\ref{scalarform}), and its min-max counterpart as the problem (\ref{scalarformcounterpart}), consider the following bilevel optimization problem.

The lower-level problem :
\begin{gather}\label{bilevel pessimistic lower for robust}
    \underset{\xi}{\text{minimize }} \;-{\phi}(x,\xi) \\ \text{subject to } \xi \in  U, \nonumber
\end{gather}

and the pessimistic upper-level problem :
\begin{gather}\label{bilevel pessimistic upper for robust}
    \underset{x}{\text{minimize }} \; \phi_p(x) \\ \text{subject to } x\in S, \nonumber
\end{gather}
where $\phi_p(x)=\underset{\xi \in U}{\sup}\;\phi(x,\xi)$. Note that we have taken the supremum over whole $U$ in the formation of $\phi_p$ instead of over the lower level solution set. However, since we are just interested in the value, they both are the same.

Similarly, for the problem (\ref{scalarform}), and its optimistic counterpart as the problem (\ref{scalaroptimisticformcounterpart}), consider the following bilevel optimization problem.

The lower-level problem :
\begin{gather}\label{bilevel optimistic lower for robust}
    \underset{\xi}{\text{minimize }} \;{\phi}(x,\xi) \\ \text{subject to } \xi \in  U, \nonumber
\end{gather}

and the optimistic upper-level problem :
\begin{gather}\label{bilevel optimistic upper for robust}
    \underset{x}{\text{minimize }} \; \phi_o(x) \\ \text{subject to } x\in S, \nonumber
\end{gather}
where $\phi_o(x)=\underset{\xi \in U}{\inf}\;\phi(x,\xi)$.

Then, we can have the following result whose proof readily follows from the constructions of the corresponding problems. We sketch part of the proof and leave it to the reader to complete.
\begin{lemma}\label{th2}
Consider the robust optimization problem (\ref{scalarform}) where the inner supremum and infimum are attained in the min-max and optimistic robust counterparts (\ref{scalarformcounterpart}) and (\ref{scalaroptimisticformcounterpart}), respectively. Then every min-max robust solution of this problem is a global real pessimistic solution to the bilevel problem (\ref{bilevel pessimistic lower for robust})-(\ref{bilevel pessimistic upper for robust}) and vice versa. Similarly, every optimistic robust solution of problem (\ref{scalarform}) is a global real optimistic solution to the bilevel problem (\ref{bilevel optimistic lower for robust})-(\ref{bilevel optimistic upper for robust}) and vice versa.
\end{lemma}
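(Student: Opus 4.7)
The plan is to reduce the lemma to showing that, under the value-attainment hypothesis, the real pessimistic objective $F_p$ associated with the bilevel problem~(\ref{bilevel pessimistic lower for robust})--(\ref{bilevel pessimistic upper for robust}) coincides pointwise on $S$ with $\phi_p$, and similarly that the real optimistic objective $F_o$ associated with~(\ref{bilevel optimistic lower for robust})--(\ref{bilevel optimistic upper for robust}) coincides on $S$ with $\phi_o$. Once these two pointwise identities are in hand, the equivalence between each robust solution notion and the corresponding bilevel solution concept is immediate from the respective definitions.

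For the pessimistic half, I would first observe that the lower-level problem~(\ref{bilevel pessimistic lower for robust}) minimizes $-\phi(x,\xi)$ over $\xi \in U$, so its solution set is $\psi(x) = \arg\max_{\xi\in U}\phi(x,\xi)$. The attainment hypothesis on the min-max counterpart~(\ref{scalarformcounterpart}) guarantees that this $\arg\max$ is nonempty for every $x \in S$, and every $\xi \in \psi(x)$ satisfies $\phi(x,\xi) = \sup_{\eta \in U}\phi(x,\eta) = \phi_p(x)$. Therefore $F_p(x) = \sup_{\xi \in \psi(x)}\phi(x,\xi) = \phi_p(x)$ for all $x \in S$. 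I would then note that a point $x^0 \in S$ is a min-max robust solution of~(\ref{scalarform}) iff $\phi_p(x^0) \leq \phi_p(x)$ for all $x \in S$, which, by the identity just established, is precisely the condition $F_p(x^0) \leq F_p(x)$ for all $x \in S$, i.e., that $x^0$ is a global real pessimistic solution of the bilevel problem.

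The optimistic half will be strictly parallel: the lower-level problem~(\ref{bilevel optimistic lower for robust}) minimizes $\phi(x,\xi)$ directly over $U$, so $\psi(x) = \arg\min_{\xi \in U}\phi(x,\xi)$, which by the attainment hypothesis on~(\ref{scalaroptimisticformcounterpart}) is nonempty and whose every member evaluates $\phi(x,\cdot)$ to $\phi_o(x)$. Hence $F_o(x) = \inf_{\xi\in\psi(x)}\phi(x,\xi) = \phi_o(x)$ on $S$, and the two solution concepts match by the same comparison-of-definitions step.

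I do not anticipate any genuine obstacle; the whole statement unpacks in one stroke once attainment is invoked. The only subtlety worth highlighting in the final writeup is that without the attainment hypothesis the set $\psi(x)$ may be empty for some $x \in S$, in which case $\sup_{\xi \in \psi(x)}\phi(x,\xi)$ and $\inf_{\xi \in \psi(x)}\phi(x,\xi)$ would degenerate and the identities $F_p \equiv \phi_p$ and $F_o \equiv \phi_o$ would fail. Flagging this explicitly clarifies why attainment is the exact hypothesis that drives the correspondence and cannot simply be discarded.
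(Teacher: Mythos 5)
Your proposal is correct and takes essentially the same approach as the paper's own proof: both use the attainment hypothesis to note that the lower-level solution set is the nonempty $\arg\max$ (resp.\ $\arg\min$) of $\phi(x,\cdot)$ over $U$, deduce the pointwise identity $F_p\equiv\phi_p$ (resp.\ $F_o\equiv\phi_o$) on $S$, and then match the two solution definitions through that identity. Like you, the paper writes out only the pessimistic half in detail and dismisses the optimistic half as strictly parallel.
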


\begin{proof}
    We give proof of one of the equivalence, namely the equivalence between the min-max robust solution and the global real pessimistic solution of the bilevel formulation. The other equivalence can be similarly derived. If one sees carefully, the assumption of value attainment of the inner supremum and infimum in the min-max and optimistic robust counterparts (\ref{scalarformcounterpart}) and (\ref{scalaroptimisticformcounterpart}), respectively is taken to ensure that the lower-level solution sets of the bilevel problems (\ref{bilevel pessimistic lower for robust})-(\ref{bilevel pessimistic upper for robust}) and (\ref{bilevel optimistic lower for robust})-(\ref{bilevel optimistic upper for robust}) are nonempty for all upper-level decision $x$. Also note that for any $x\in S$, the solution set of the lower-level problem (\ref{bilevel pessimistic lower for robust}) is given by \begin{align*}
        \psi_1(x) & =\{\xi \in U\;:\; t \mapsto -\phi(x,t) \text{ is minimized at }\xi\}\\
        & =\{\xi \in U\;:\; t \mapsto \phi(x,t) \text{ is maximized at }\xi\}.
    \end{align*}
    We also know that by assumption $\psi_1(x)$ is nonempty for all $x\in S$. Thus
    \begin{align*}
        \phi_p(x) &=\underset{\xi \in U}{\sup}\;\phi(x,\xi)\\
        &=\phi(x,t) \; \text{for all }t\in \psi_1(x)\\
        &=\underset{t \in \psi_1(x)}{\sup}\;\phi(x,t).
    \end{align*}

    Now let us come to the main proof. Assume that $x^*\in S$ be a min-max robust solution for (\ref{scalarform}). Thus it is a solution for (\ref{scalarformcounterpart}). Hence
    \begin{align*} \underset{\xi \in U}{\sup}\;\phi(x^*,\xi) &\leq \underset{\xi \in U}{\sup}\;\phi(x,\xi)\; \text{for all }x\in S, \\
    \text{that is, }\;\;\; \phi_p(x^*) & \leq \phi_p(x)\; \text{for all }x\in S.
    \end{align*}
    Hence $x^*$ is a global real pessimistic solution of the bilevel problem (\ref{bilevel pessimistic lower for robust})-(\ref{bilevel pessimistic upper for robust}).

    Conversely, if $y^*$ is a global real pessimistic solution of the bilevel problem (\ref{bilevel pessimistic lower for robust})-(\ref{bilevel pessimistic upper for robust}), then 
    \begin{align*} \phi_p(y^*) & \leq \phi_p(y)\; \text{for all }y\in S,\\ \text{that is, }\underset{\xi \in U}{\sup}\;\phi(y^*,\xi) &\leq \underset{\xi \in U}{\sup}\;\phi(y,\xi)\; \text{for all }y\in S .
    \end{align*} 
    Thus, $y^*$ is a min-max robust solution for (\ref{scalarform}).
\end{proof}

\end{example}

\begin{example}
Now, we give an example where infinite dimensional space actually occurs. The problem is known as the obstacle problem in variational inequalities literature. We take it from \cite{mehlitz2017contributions}. Consider a bounded domain $\Omega\subseteq \mathbb{R}^d$ with a sufficiently smooth boundary. Let $X\subseteq L^2(\Omega)$ be a nonempty closed convex set, $y_d\in L^2(\Omega)$ be a desired state, and $\sigma >0$ be a regularization parameter. The obstacle problem is the following hierarchical problem:
    
    \begin{gather}\label{obstacle upper}
    \underset{x,y}{\text{minimize }} \frac{1}{2}\lvert \lvert y-y_d\rvert \rvert_{L^2(\Omega)}^2+\frac{\sigma}{2}\lvert \lvert x\rvert \rvert^2_{L^2(\Omega)} \\ \text{subject to } x\in X, \nonumber \\y \in \Psi(x), \nonumber
\end{gather}
 where $\Psi(x)$ denotes the solution set mapping of the lower-level problem:
 
 \begin{gather}
    \underset{y}{\text{minimize }} \frac{1}{2}\int_{\Omega}\lvert \triangledown y(\omega)\rvert^2_2 d\omega-\int_{\Omega}x(\omega)y(\omega)d\omega \\ \text{subject to } y\in H^1_0(\Omega), \nonumber \\y(w)\geq 0 \text{ almost everywhere on }  \Omega . \nonumber
\end{gather}
Here $H^1_0(\Omega)$ is the Sobolev space consisting of closure of the infinitely differentiable functions compactly supported in $\Omega$ in $H_1(\Omega)$ with the standard Sobolev norm. 

In the language of bilevel programming, it is a standard optimistic formulation of a bilevel decision problem as in the upper-level problem (\ref{obstacle upper}), one is trying to minimize over $x$ as well as $y$. 
\end{example} 

We have now given sufficient number of examples to illustrate that bilevel problems do occur in many practical decision-making situations. We now move to the theory of bilevel programming and see why set-valued optimization is of relevance. In the paper \cite{Dempedutta2006bilevelwithconvexlower} (also see \cite{Dutta2017optimalityofbilevelthroughvariational}), the authors have shown that when the lower-level solution set $\psi(x)$ is not singleton for each $x\in X$, the upper-level problem actually takes the following set-valued form:
\begin{gather}\label{bilevel set-valued upper}
    \underset{x}{\text{minimize}} \; F(x,\psi(x)) \\ \text{subject to } x\in X \nonumber ,
\end{gather}
where $F(x,\psi(x)):=\lbrace F(x,y) \mid y\in \psi(x) \rbrace$ is a set for each $x\in X$.

Thus, set-valued optimization occurs very naturally in bilevel programming. This set-valued formulation looks more natural because for the upper-level decision maker, for each of his/her decision $x$, the set $F(x,\psi(x))$ contains all the possible outcomes that can happen when the lower-level decision maker or the follower chooses his/her decision optimally. This is important since the leader does not know beforehand which of the optimal decisions the follower would choose. This set-valued formulation has been studied in \cite{Zemkohoillposed,pileckathesis}. However, in those papers, the value attainment assumption has been imposed. In \cite{Zemkohoillposed}, the optimistic and pessimistic formulations are in terms of minimum and maximum instead of infimum and supremum. In \cite{pileckathesis}, the functions $F$ and $f$ are assumed to be continuous, and $\psi(x)$ has been assumed to be compact. In this paper, we are interested in the general case; we study the connection of the set-valued formulation with the optimistic and the pessimistic formulations without value attainment assumption. We first recall some basics of set-valued optimization before proving our main results.

A set-valued optimization problem in the most general form looks like: 
\begin{gather} \label{setvaluedoptimization}
    \text{minimize } G(x) \\ \text{subject to } x \in S, \nonumber
\end{gather} 
where $G:X \rightarrow 2^{Z}$ is a map, $X$ and $Z$ are normed linear spaces, and $S\subseteq X$ a nonempty constraint set. The power set of $Z$ is denoted by $2^Z$. Well-definedness of $G$ is preserved by considering the range as the power set of $Z$. Since optimization needs order structure, it is assumed that 
$Z$ is partially ordered by a nonempty closed convex pointed cone $C$, where the induced order $\leq_C$ is defined as: for $z_1,z_2\in Z$, $z_1\leq_C z_2$ if and only if $z_2-z_1\in C$.

Early literature on optimization with set-valued maps can be traced back to the works of Borwein (see \cite{borwein1977multivalued,borwein1977proper,borwein1981lagrange}), Postolic\u{a} (see \cite{postolm1986vectorial}), Corley (see \cite{corley1987existence,corley1988optimality}). A comprehensive study can be found in the book by Luc \cite{Luc89}. The notion of solution that has been considered in these papers is later called the `vector approach' of solution as it is a generalization of the solution notion of vector optimization problems (see \cite{Luc89}).
 \begin{definition}
 Consider the problem {\normalfont(\ref{setvaluedoptimization})}. The tuple $(x^0,y^0) \in S\times Z$ is called a \textit{global vector solution} to {\normalfont(\ref{setvaluedoptimization})} if $y^0 \in G(x^0)$ and $y^0$ is a minimal point of $G(S)=\underset{x\in S} {\bigcup} G(x)$, where the minimality is with respect to the order relation $\leq_C$.
 \end{definition}
By using the vector approach we find the minimal elements of the total image set $G(S)$ with respect to the underlying vector order $\leq_C$. Though this is a popular notion of solution, a drawback of this notion is that it only cares about one good point in the image set of a solution. In real life, it sometimes may not be quite the correct interpretation. For example, in a soccer league, one team with one excellent player and all other below-average players may not qualify as a strong team. Therefore, a better way of comparison among sets seemed more appropriate, and the so-called `set-relation' approach in set-valued optimization problems was introduced and popularized by Kuroiwa et al. (see \cite{KurTanTru97,DK1,DK,DK4}). The set-relation approach opened up a whole new area of research.
 
In the set-relation approach, sets are compared using set-order relations. For two nonempty subsets $A,B \subseteq Z$, consider the following set order relations:
\begin{itemize} 
	\item $A \leq ^l _C B $ if and only if $A+C \supseteq B$.
	\item $A \leq ^u _C B $ if and only if $B-C \supseteq A$.
\end{itemize} 
These set order relations are preorders (that is reflexive and transitive), and with respect to each such order, the set $2^Z$ is a preordered space. Of course, this is a disadvantage because these new orders are not partial orders anymore. 

 Let $\mathcal{A} \subseteq 2^Z$ be a nonempty collection of nonempty sets. We say that $A\in \mathcal{A}$ is an \textit{$l$-minimal} element of $\mathcal{A}$ if it is minimal in $\mathcal{A}$ with respect to the set order relation $\leq ^l _C$, that is, for any $B \in \mathcal{A} $, $B\leq_C^l A$ implies $A\leq_C^l B$. Similarly, we say that $A\in \mathcal{A}$ is an \textit{$u$-minimal} element of $\mathcal{A}$ if it is minimal in $\mathcal{A}$ with respect to the $\leq ^u _C$ set order relation, that is, for any $B \in \mathcal{A} $, $B\leq_C^u A$ implies $A\leq_C^u B$. 
 
 Based on the set order relations introduced above, the followings are a few notions of the `set-relation approach' of solutions for (\ref{setvaluedoptimization}) as has been introduced in \cite{DK1}.
\begin{definition}
Consider the problem {\normalfont(\ref{setvaluedoptimization})}. A point $x^0 \in S$ is called 
\begin{itemize}
\item[(i)] a \textit{global $l$-minimal} (also called \textit{$l$-type}) solution to {\normalfont(\ref{setvaluedoptimization})} if for any $x \in S$ such that $G(x) \leq^l_C$ $G(x^0)$ we have $G(x^0) \leq^l_C$ $G(x)$.

\item[(ii)] a \textit{global $u$-minimal} (also called \textit{$u$-type}) solution to {\normalfont(\ref{setvaluedoptimization})} if for any $x \in S$ such that $G(x) \leq^u_C$ $G(x^0)$ we have $G(x^0) \leq^u_C$ $G(x)$.
\end{itemize}
\end{definition}

The above definitions of solutions for set-valued optimization problems are that of global solutions. However, when dealing with nonconvex problems, which is the case for bilevel programming problems, local solutions are a more accurate entity to look for. Therefore, we also need notions of local solutions for set-valued optimization problems. With this aim, we define the following notions of local solutions for set-valued optimization problems.

\begin{definition}
Consider the problem {\normalfont(\ref{setvaluedoptimization})}.  
\begin{itemize}
\item[(i)] The tuple $(x^0,y^0) \in S\times Z$ is called a \textit{local vector solution} to {\normalfont(\ref{setvaluedoptimization})} if $y^0 \in G(x^0)$ and there exists an $\epsilon >0$ such that $y^0$ is a minimal point of $G\left(S\cap B(x^0,\epsilon) \right)$. 

\item[(ii)] A point $x^0 \in S$ is called a \textit{local $l$-minimal} solution to {\normalfont(\ref{setvaluedoptimization})} if there exists an $\epsilon >0$ such that for any $x \in S\cap B(x^0,\epsilon)$ with $G(x) \leq^l_C$ $G(x^0)$, we have $G(x^0) \leq^l_C$ $G(x)$.

\item[(iii)] A point $x^0 \in S$ is called a \textit{local $u$-minimal} solution to {\normalfont(\ref{setvaluedoptimization})} if there exists an $\epsilon >0$ such that for any $x \in S\cap B(x^0,\epsilon)$ with $G(x) \leq^u_C$ $G(x^0)$, we have $G(x^0) \leq^u_C$ $G(x)$.
\end{itemize}
\end{definition} 

We now recall a few terminologies for a set-valued map from the literature, which will be used for various purposes in this paper (For details one may look at \cite{Gofert}).
\begin{definition}
The set-valued map $G$ in (\ref{setvaluedoptimization}) is called
 
\begin{enumerate}[(i)]

\item[(i)] \textit{closed (compact)-valued} on $S$ if, for each $x\in S$, $G(x)$ is closed (compact, respectively). 


\item[(ii)] graph closed if graph of $G$, defined by $Graph(G):=\lbrace (x,y)\in X \times Z \mid y \in G(x)\rbrace$, is a closed subset of $X \times Z$.

\item[(iii)] locally bounded at $x_0\in S$ if there is a neighbourhood $ U$ of $x_0$ such that $G(U\cap S):=\underset{x\in U\cap S }{\bigcup}G(x)$ is bounded in $Z$.

\item[(iv)] \textit{upper continuous} at $x_0 \in S$ if for every open set $V$ in $Z$ with $G(x_0) \subseteq V$, there exists a neighbourhood $ U$ of $x_0$ such that $G(x) \subseteq V$ for every $x\in U \cap S$. $G$ is called upper continuous on $S$ if it is so at each point of $S$. 

\item[(v)] \textit{lower continuous} at $x_0 \in S$ if for every open set $V$ in $Z$ with $G(x_0) \cap V \neq \emptyset$, there exists a neighbourhood $U$ of $x_0$ such that $G(x) \cap V \neq \emptyset$ for every $x\in U \cap S$. $G$ is called lower continuous on $S$ if it is so at each point of $S$. 

\item[(vi)] 
\textit{continuous} at $x_0 \in S$ if it is both upper continuous and lower continuous at $x_0 \in S$. $G$ is called continuous on $S$ if it is so at each point of $S$.

\item[(vii)] 
\textit{compact} at $x_0 \in S$ if for every sequence $\lbrace x_n \rbrace_n \subseteq S$ with $x_n \rightarrow x_0$ and $y_n \in G(x_n)$, there exist a subsequence $\lbrace y_{n_k} \rbrace_k$ and an element $y_0 \in G(x_0)$ such that $y_{n_k} \rightarrow y_0$. $G$ is called compact on $S$ if it is so at each point of $S$.

\end{enumerate}

\end{definition}

\begin{remark}
The notions of upper continuity and lower continuity also appear under the name of upper semicontinuity and lower semicontinuity, respectively, in the literature. We followed these terminologies from \cite{Gofert}. When $G$ is single-valued, the above notions of upper and lower continuity coincide with the notion of continuity of single-valued maps, and hence the mentioned names look more appropriate. Also, a set-valued map is compact if and only if it is upper continuous and compact valued. In the famous book of Berge (\cite{berge1997topological}), the upper continuous map has always been assumed to be compact-valued. But that need not be the case here. For example, $G:\mathbb{R}\rightarrow 2^{\mathbb{R}}$ defined by $G(x)=[0,\infty)$ is upper continuous but not compact in our notations. 
\end{remark}

\section{Connection between existing notions of solutions of bilevel programming problem and solutions of the set-valued formulation}\label{scalar connection}

In this section, we focus on various reformulations of a bilevel programming problem and explore their connection with one another. It must be noted that global (local) real optimistic and global (local) real pessimistic solutions to the problem (\ref{bilevel real upper}) correspond to the global (local) solutions of (\ref{bilevel optimistic}) and (\ref{bilevel pessimistic}), respectively. The following relation among real optimistic solutions and standard optimistic solutions of  (\ref{bilevel real upper}) is established in the literature. 

\begin{proposition}\label{real to standard local global}
Let $x^*$ be a local (global) real optimistic solution to the problem (\ref{bilevel real upper}). Let there exist $y^*\in\psi(x^*)$ such that $F(x^*,y^*)=F_o(x^*)$. Then $(x^*,y^*)$ is a local (global) standard optimistic solution to the problem (\ref{bilevel real upper}).
\end{proposition}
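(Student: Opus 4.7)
The plan is to exploit directly the definition of $F_o$ as the infimum of $F(x,\cdot)$ over $\psi(x)$, together with the hypothesis $F(x^*,y^*)=F_o(x^*)$. The core chain of inequalities is: for any $x\in X$ and $y\in\psi(x)$, one has $F(x,y)\geq F_o(x)\geq F_o(x^*)=F(x^*,y^*)$, where the first inequality is by definition of infimum, the second is the real optimistic optimality of $x^*$, and the equality is the value attainment assumption. This immediately settles the global statement.

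For the local statement, I would first fix $\epsilon_0>0$ from the local real optimistic definition, so that $F_o(x)\geq F_o(x^*)$ for all $x\in X$ with $\|x-x^*\|<\epsilon_0$. Then I would choose $\epsilon=\epsilon_0$ in Definition~\ref{onedefinition}. Given any $(x,y)\in\mathrm{Graph}(\psi)$ with $\|(x,y)-(x^*,y^*)\|_{\mathcal{X}\times\mathcal{Y}}<\epsilon$, the convention $\|(x,y)\|_{\mathcal{X}\times\mathcal{Y}}=\|x\|_{\mathcal{X}}+\|y\|_{\mathcal{Y}}$ recalled just after Definition~\ref{onedefinition} gives $\|x-x^*\|_{\mathcal{X}}\leq\|(x,y)-(x^*,y^*)\|_{\mathcal{X}\times\mathcal{Y}}<\epsilon_0$. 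Hence the same three-step chain of inequalities applies and yields $F(x,y)\geq F(x^*,y^*)$, establishing local standard optimistic optimality of $(x^*,y^*)$.

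There is no genuine obstacle here; the value-attainment hypothesis is precisely what is needed to convert the infimum $F_o(x^*)$ into an attained value $F(x^*,y^*)$ that can be compared pointwise. The only small care point is the choice of the product norm: since we use $\|x\|+\|y\|$, closeness in the product automatically controls closeness in the first coordinate, so no neighborhood shrinking is required and one can take exactly the same $\epsilon$ as in the local real optimistic hypothesis.
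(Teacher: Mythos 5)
Your proof is correct and is essentially the argument the paper has in mind: the paper itself gives no self-contained proof, citing \cite[Proposition 3.1]{Dutta2017optimalityofbilevelthroughvariational} for the local case and leaving the global case as an exercise, and the intended reasoning in both cases is exactly your chain $F(x,y)\geq F_o(x)\geq F_o(x^*)=F(x^*,y^*)$. Your observation that the sum norm $\|(x,y)\|_{\mathcal{X}\times \mathcal{Y}}=\|x\|_{\mathcal{X}}+\|y\|_{\mathcal{Y}}$ makes $\|x-x^*\|_{\mathcal{X}}\leq \|(x,y)-(x^*,y^*)\|_{\mathcal{X}\times \mathcal{Y}}$, so the same $\epsilon$ works in Definition~\ref{onedefinition} without shrinking, is precisely the detail needed to make the local case rigorous.
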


\begin{proof}
The proof for the local solution is given in \cite[Proposition 3.1]{Dutta2017optimalityofbilevelthroughvariational}. The proof for the global solution is much simpler and can be left as an exercise to the reader. 
\end{proof}

The above proposition illustrates that the value attainment in the inner minimization is indeed necessary to go from a real optimistic solution to a standard optimistic solution. This value attainment assumption may not be satisfied even for nice bilevel problems with convex lower-level objective functions. By a convex lower-level problem, we mean that the lower-level objective is convex in the followers variable. We illustrate this through an example below.

\begin{example}
    Consider the following bilevel problem:
\begin{gather}
    \underset{x}{\text{`minimize'}} \; F(x,y)=e^{x-y} \\ \text{subject to } x\in [0,1], y\in \psi(x), \nonumber
\end{gather}
where $\psi(x)$ is the global solution set of the lower-level problem:
 \begin{gather}
    \underset{y}{\text{minimize}} \;f(x,y)=\max \lbrace x-y , 0 \rbrace 
\end{gather}
In this case, the lower-level solution set is $\psi(x)=[x,\infty)$ for every upper-level decision $x\in [0,1]$ and the optimistic value function $$F_o(x)=\underset{y\in[x,\infty)}{\inf} e^{x-y}= 0\;\; \text{ for all } x\in [0,1].$$ This problem does not have a global standard optimistic solution, though it has many global real optimistic solutions. In fact, all the $x\in [0,1]$ are global real optimistic solutions.
\end{example}

The above example illustrates an important point. Though a real optimistic solution appears to be a natural construct, the problem with a real optimistic solution is that, by making this choice, the upper-level player cannot guarantee to achieve the optimistic value $F_o$, unless the value attainment happens in the inner minimization. He/she can only ensure that under the value attainment assumption, when it also becomes a standard optimistic solution. 
 
Now, let us look at the reverse implication. With respect to the global solution, the following result holds.
\begin{proposition}{\normalfont(\cite[Proposition 3.2]{Dutta2017optimalityofbilevelthroughvariational})} \label{existing global standard to global real}
Let $(x^*,y^*)$ be a global standard optimistic solution to the problem (\ref{bilevel real upper}). Then $x^*$ is a global real optimistic solution to the problem (\ref{bilevel real upper}).
\end{proposition}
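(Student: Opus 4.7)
The proof should be short and direct, relying only on unpacking the definitions of the two notions and taking an infimum. The plan is as follows.

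First I would fix $(x^*, y^*)$ satisfying the definition of a global standard optimistic solution, so that $x^* \in X$, $y^* \in \psi(x^*)$, and $F(x,y) \geq F(x^*, y^*)$ for every $x \in X$ and every $y \in \psi(x)$. The goal is to establish $F_o(x) \geq F_o(x^*)$ for all $x \in X$.

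The key observation is that the standard optimistic inequality, when restricted to $x = x^*$, forces $F(x^*, \cdot)$ to attain its infimum over $\psi(x^*)$ at $y^*$. Indeed, for every $y \in \psi(x^*)$ we have $F(x^*, y) \geq F(x^*, y^*)$, and since $y^* \in \psi(x^*)$ itself, the reverse inequality $F(x^*, y^*) \geq \inf_{y \in \psi(x^*)} F(x^*, y)$ is automatic. Hence $F(x^*, y^*) = F_o(x^*)$, and in this case the infimum in $F_o(x^*)$ is actually attained.

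Next, for an arbitrary $x \in X$ and any $y \in \psi(x)$, the standard optimistic inequality gives $F(x,y) \geq F(x^*, y^*) = F_o(x^*)$. Taking the infimum over $y \in \psi(x)$ yields $F_o(x) \geq F_o(x^*)$, which is exactly the defining property of a global real optimistic solution.

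There is essentially no obstacle here; the statement is an immediate consequence of the fact that the standard optimistic formulation forces value attainment at $y^*$ in the inner minimization at $x^*$, after which taking an infimum does all the remaining work. Note that no continuity or compactness hypothesis is needed, in contrast to the converse direction treated in Proposition \ref{real to standard local global}, where attainment had to be imposed as an extra assumption.
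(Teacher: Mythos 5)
Your proof is correct: the observation that the standard optimistic inequality at $x=x^*$ forces $F_o(x^*)=F(x^*,y^*)$, followed by taking the infimum over $y\in\psi(x)$ for arbitrary $x$, is exactly the direct argument this statement requires. The paper itself states this proposition without proof, citing \cite[Proposition 3.2]{Dutta2017optimalityofbilevelthroughvariational}, and your argument is the standard one behind that reference, so there is nothing to add.
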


The above propositions show that globally, both real optimistic and standard optimistic formulations are well related, but they are not exactly equivalent because the value attainment of the inner minimization problem plays a role. The local solution case is even more difficult. While one can go from a local real optimistic solution to a local standard optimistic solution under the value attainment assumption, the other direction is not true in general, as can be seen from Example 1.3 in the monograph \cite{dempe2015book}, which we mention below.

\begin{example}\label{example local standard but not local real}
Consider the following bilevel programming problem where the lower-level problem is given by 
\begin{gather*}
    \underset{y}{\text{minimize}} \;f(x,y)=xy \\ \text{subject to } y\in [0,1] \text{ and } x,y \in \mathbb{R}, \nonumber
\end{gather*}

and the upper-level problem is 
\begin{gather*}
    \underset{x}{\text{`minimize' }} \; F(x,y)=x \\ \text{subject to } x\in [-1,1], y\in \psi(x). \nonumber
\end{gather*}
 The lower-level solution set map $\psi$ can be calculated to be
  $$ \psi(x)=\begin{cases}
      \{0\} & \text{ if } 0<x\leq 1\\
      \{1\} & \text{ if } -1\leq x<0\\
      [0,1] & \text{ if } x=0\\
  \end{cases}.$$
Also, since the upper-level objective function does not depend on the lower-level player's variable $y$, the optimistic value function $F_o$ is simply $F$, that is, $F_o(x)=F(x)$ for all $x\in [-1,1]$. The point $x=0$ is clearly not a local minimizer of $F_o$; that is, $x=0$ is not a local real optimistic solution. However, it can be seen that $(0,0)$ is a local standard optimistic solution by considering the ball of radius $\frac{1}{2}$ around the point $(0,0)$ and taking its intersection with the graph of $\psi$.
\end{example}

The following proposition from \cite{Dutta2017optimalityofbilevelthroughvariational} is one attempt to provide a sufficient condition to go from a local standard optimistic solution to a local real optimistic solution.

\begin{proposition}{\normalfont(\cite[Proposition 3.3]{Dutta2017optimalityofbilevelthroughvariational})}\label{existing local standard to local real}
Consider the bilevel programming problem (\ref{bilevel real upper}) where $\mathcal{Y}=\mathbb{R}^p$ for some $p\in \mathbb{N}$. Let $(x^*,y^*)$ be a local standard optimistic solution to the problem (\ref{bilevel real upper}) and assume that the lower level solution set map $\psi$ is locally bounded and graph closed. Then $x^*$ is a local real optimistic solution to the problem (\ref{bilevel real upper}) provided $\psi(x^*)=\lbrace y^*\rbrace$. 
\end{proposition}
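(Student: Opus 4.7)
The plan is to argue by contradiction, leveraging the three hypotheses (finite dimensionality of $\mathcal{Y}$, local boundedness of $\psi$, and graph closedness) to force any ``spoiling'' sequence for the local real optimistic property to yield a sequence in $Graph(\psi)$ converging to $(x^*,y^*)$, which then contradicts the local standard optimistic property.

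Concretely, I would first suppose that $x^*$ is \emph{not} a local real optimistic solution, and produce a sequence $x_n \to x^*$ with $x_n \in X$ and $F_o(x_n) < F_o(x^*)$. Since $\psi(x^*)=\{y^*\}$, we have $F_o(x^*)=F(x^*,y^*)$, so the strict inequality $F_o(x_n) < F(x^*,y^*)$ plus the definition of infimum produces an element $y_n \in \psi(x_n)$ with $F(x_n,y_n) < F(x^*,y^*)$. This is the sequence I will eventually contradict.

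Next I would exploit $\mathcal{Y}=\mathbb{R}^p$ together with the local boundedness of $\psi$ at $x^*$: there is a neighborhood $U$ of $x^*$ on which $\psi(U\cap X)$ is bounded, so for $n$ large, $y_n$ lies in a bounded subset of $\mathbb{R}^p$, and Bolzano--Weierstrass gives a subsequence $y_{n_k} \to y_0$. The graph closedness of $\psi$ applied to $(x_{n_k},y_{n_k}) \in Graph(\psi)$ with $(x_{n_k},y_{n_k}) \to (x^*,y_0)$ forces $y_0 \in \psi(x^*) = \{y^*\}$, hence $y_0 = y^*$.

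At this point $(x_{n_k},y_{n_k}) \to (x^*,y^*)$ in the product norm $\|(x,y)\|=\|x\|+\|y\|$, so for large $k$ the pair lies in the $\epsilon$-ball around $(x^*,y^*)$ supplied by Definition \ref{onedefinition}; the local standard optimistic hypothesis then yields $F(x_{n_k},y_{n_k}) \geq F(x^*,y^*)$, directly contradicting the strict inequality established earlier. The argument needs no continuity hypothesis on $F$. The main obstacle, if any, is simply to see that $\psi(x^*)=\{y^*\}$ is exactly what lets graph closedness pin the subsequential limit to the single candidate $y^*$ — without this, $y_0$ could be a different point of $\psi(x^*)$ whose distance from $y^*$ prevents us from entering the $\epsilon$-ball, and the implication would fail (as Example \ref{example local standard but not local real} shows once $\psi(x^*)$ is not a singleton).
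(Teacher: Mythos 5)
Your proof is correct and is essentially the same argument the paper uses for its Proposition \ref{standard to real local finite dimension}, which generalizes this statement: argue by contradiction, extract $y_n\in\psi(x_n)$ with $F(x_n,y_n)<F(x^*,y^*)$ from the definition of the infimum, use local boundedness of $\psi$ and Bolzano--Weierstrass in $\mathbb{R}^p$ to get a convergent subsequence, use graph closedness to place the limit in $\psi(x^*)$, and contradict local standard optimality. Your closing observation is also accurate: the singleton hypothesis is precisely what pins the limit to $y^*$, which is why no lower semicontinuity of $F$ is needed here, whereas the paper's generalization must add that hypothesis (together with the strict-minimizer condition) to handle a limit $\hat{y}\neq y^*$.
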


In the next result, we show that the single-valuedness assumption
in the above proposition may be dropped.

\begin{proposition}\label{standard to real local finite dimension}
Consider the bilevel programming problem (\ref{bilevel real upper}) where $\mathcal{Y}=\mathbb{R}^p$ for some $p\in \mathbb{N}$. Let $(x^*,y^*)$ be a local standard optimistic solution to the problem (\ref{bilevel real upper}) and assume that the lower level solution set map $\psi$ is locally bounded and graph closed and $F$ is jointly lower semicontinuous in $x,y$. Then $x^*$ is a local real optimistic solution to the problem (\ref{bilevel real upper}) provided $y^*$ is the strict minimizer of $F(x^*,y)$ over $y \in \psi(x^*)$, that is, $F(x^*,y^*)<F(x^*,y)$ for all $y^*\neq y \in \psi(x^*)$.
\end{proposition}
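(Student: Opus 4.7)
The plan is to argue by contradiction, using the strict minimizer hypothesis to locate the limit of a minimizing sequence exactly at $y^*$, so that the local standard optimistic property applies along that sequence.

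First I would observe that the strict minimizer assumption guarantees $F_o(x^*)=F(x^*,y^*)$, because $y^*\in\psi(x^*)$ realizes the infimum. Next, suppose for contradiction that $x^*$ is not a local real optimistic solution. Then there is a sequence $\{x_n\}\subseteq X$ with $x_n\to x^*$ and $F_o(x_n)<F_o(x^*)$ for every $n$. By the definition of infimum, I can select $y_n\in\psi(x_n)$ with $F(x_n,y_n)<F_o(x^*)=F(x^*,y^*)$.

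The key step is then to extract a subsequential limit of $\{y_n\}$ and identify it with $y^*$. Since $\psi$ is locally bounded at $x^*$, once $x_n$ enters a small enough neighbourhood of $x^*$ the sequence $\{y_n\}$ is bounded in $\mathbb{R}^p$; by finite-dimensionality, there is a convergent subsequence $y_{n_k}\to\bar{y}$. Graph closedness of $\psi$ combined with $(x_{n_k},y_{n_k})\to(x^*,\bar{y})$ forces $\bar{y}\in\psi(x^*)$. Joint lower semicontinuity of $F$ then yields
\begin{equation*}
F(x^*,\bar{y})\leq \liminf_{k\to\infty} F(x_{n_k},y_{n_k})\leq F(x^*,y^*).
\end{equation*}
Because $y^*$ is the \emph{strict} minimizer of $F(x^*,\cdot)$ on $\psi(x^*)$, the inequality $F(x^*,\bar{y})\leq F(x^*,y^*)$ is only compatible with $\bar{y}=y^*$. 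Hence $(x_{n_k},y_{n_k})\to(x^*,y^*)$ inside $\mathrm{Graph}(\psi)$.

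Finally I would close the argument by invoking the local standard optimistic property of $(x^*,y^*)$: for all sufficiently large $k$, the point $(x_{n_k},y_{n_k})$ lies in the $\epsilon$-neighbourhood of $(x^*,y^*)$, so $F(x_{n_k},y_{n_k})\geq F(x^*,y^*)$, contradicting the earlier inequality $F(x_{n_k},y_{n_k})<F(x^*,y^*)$. I expect the main obstacle to be precisely the identification of the subsequential limit with $y^*$; this is exactly where the strict-minimizer hypothesis replaces the single-valuedness assumption of Proposition~\ref{existing local standard to local real}, and without strictness one could have $\bar{y}\in\psi(x^*)\setminus\{y^*\}$ with equal objective value, in which case the local standard optimistic inequality would no longer apply along $(x_{n_k},y_{n_k})$.
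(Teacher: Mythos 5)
Your proposal is correct and follows essentially the same argument as the paper's proof: contradiction via a sequence $x_n\to x^*$ with $F_o(x_n)<F_o(x^*)$, selection of $y_n\in\psi(x_n)$ below the value $F_o(x^*)$, Bolzano--Weierstrass plus local boundedness and graph closedness to obtain a limit $\bar{y}\in\psi(x^*)$, lower semicontinuity to bound $F(x^*,\bar{y})$, the strict-minimizer hypothesis to force $\bar{y}=y^*$, and finally the local standard optimistic property to reach the contradiction. The only cosmetic difference is that the paper splits the conclusion into the two cases $\hat{y}\neq y^*$ and $\hat{y}=y^*$, whereas you use strictness to identify the limit first and then derive a single contradiction; the content is identical.
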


\begin{proof}
It is given that $(x^*,y^*)$ is a local standard optimistic solution to the problem (\ref{bilevel real upper}). Also, by assumption, $F_o(x^*)=F(x^*,y^*)$. Suppose, if possible, that $x^*$ is not a local real optimistic solution to the problem (\ref{bilevel real upper}). Then we can construct a sequence $\lbrace x_n\rbrace_n$ in $X$ such that $x_n\rightarrow x^*$ and $F_o(x_n)<F_o(x^*)$ for all $n\in\mathbb{N}$. Now by definition of $F_o$ being infimum, we can find a sequence $\lbrace y_n \rbrace_n$  such that $y_n \in \psi(x_n)$ and $$F_o(x_n)\leq F(x_n,y_n)<F_o(x^*) \text{ for all } n\in \mathbb{N}.$$ Now, since $\psi$ is assumed to be locally bounded, $\lbrace y_n \rbrace_n$ is a bounded sequence and hence, using Bolzano-Weierstrass theorem, without loss of generality we may assume that $y_n\rightarrow \hat{y}$ for some $\hat{y}\in \mathcal{Y}$. Since the map $\psi$ is also assumed to be graph-closed, $\hat{y}\in \psi(x^*)$.  Now $(x_n,y_n)\rightarrow (x^*,\hat{y})$ and $F(x_n,y_n)<F_o(x^*)$. Since $F$ is lower semicontinuous, we get $F(x^*,\hat{y})\leq F_o(x^*)$ and hence by the definition of infimum, $F(x^*,\hat{y})= F_o(x^*)$. Now, two cases can arise. Either $\hat{y}\neq y^*$ or $\hat{y}= y^*$. If $\hat{y}\neq y^*$, we get a contradiction since $F(x^*,y^*)=F_o(x^*)<F(x^*,y)$ for all $y^*\neq y \in \psi(x^*)$ and $\hat{y}\in \psi(x^*)$. If $\hat{y}= y^*$, by construction $(x_n,y_n)\rightarrow (x^*,\hat{y})=(x^*,y^*)$ and $F(x_n,y_n)<F_o(x^*)=F(x^*,y^*)$. But this contradicts that $(x^*,y^*)$ is a local standard optimistic solution to the problem (\ref{bilevel real upper}). Therefore, our assumption that  $x^*$ is not a local real optimistic solution cannot be true. So $x^*$ is a local real optimistic solution to the problem (\ref{bilevel real upper}).
\end{proof}

The above result is true when $\mathcal{Y}$ is assumed to be finite-dimensional. However, the graph closed, and the local boundedness assumptions would not suffice the same when $\mathcal{Y}$ is an infinite-dimensional normed linear space. The following proposition can be derived for general normed linear spaces. 

\begin{proposition}\label{standard to real local}
Let $(x^*,y^*)$ be a local standard optimistic solution to the problem (\ref{bilevel real upper}) and assume that the lower level solution set map $\psi$ is compact and $F$ is jointly lower semicontinuous in $x,y$. Then $x^*$ is a local real optimistic solution to the problem (\ref{bilevel real upper}) provided $y^*$ is the strict minimizer of $F(x^*,y)$ over $y \in \psi(x^*)$.
\end{proposition}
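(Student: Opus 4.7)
The plan is to mirror the proof of Proposition \ref{standard to real local finite dimension}, replacing the Bolzano--Weierstrass extraction together with graph-closedness by a single appeal to compactness of $\psi$ at $x^*$. Compactness in the sense defined in Section \ref{prelim} bundles exactly the two properties that were previously used in tandem: sequential extraction of a convergent subsequence from $\{y_n\}$ with $y_n \in \psi(x_n)$ and $x_n \to x^*$, together with the guarantee that the limit lies in $\psi(x^*)$. This is precisely what is needed to run the argument without any appeal to finite-dimensionality of $\mathcal{Y}$.

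Proceeding by contradiction, suppose $x^*$ is not a local real optimistic solution. Then there exists a sequence $\{x_n\} \subseteq X$ with $x_n \to x^*$ and $F_o(x_n) < F_o(x^*)$ for every $n$. Using the definition of the infimum in $F_o(x_n) = \inf_{y \in \psi(x_n)} F(x_n,y)$, for each $n$ one can pick $y_n \in \psi(x_n)$ with
\[
F(x_n,y_n) < F_o(x^*),
\]
for instance by requiring $F(x_n,y_n) < F_o(x_n) + \tfrac{1}{2}(F_o(x^*) - F_o(x_n))$. Note that, by hypothesis, $F(x^*,y^*) = F_o(x^*)$, which is implicit in the setup (the local standard optimistic solution $(x^*, y^*)$ is assumed to realize the optimistic value, as in the preceding results).

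Next, invoke compactness of $\psi$ at $x^*$ to extract a subsequence $y_{n_k} \to \hat y$ with $\hat y \in \psi(x^*)$. Joint lower semicontinuity of $F$ yields
\[
F(x^*,\hat y) \leq \liminf_{k \to \infty} F(x_{n_k},y_{n_k}) \leq F_o(x^*),
\]
while $\hat y \in \psi(x^*)$ forces $F(x^*,\hat y) \geq F_o(x^*)$, so $F(x^*,\hat y) = F_o(x^*) = F(x^*,y^*)$. Now split into two cases. If $\hat y \neq y^*$, the strict minimality of $y^*$ on $\psi(x^*)$ gives $F(x^*,\hat y) > F(x^*,y^*)$, contradicting the equality just obtained. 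If $\hat y = y^*$, then $(x_{n_k},y_{n_k}) \to (x^*,y^*)$ in the product norm while $F(x_{n_k},y_{n_k}) < F(x^*,y^*)$, contradicting local standard optimality of $(x^*,y^*)$. The only nontrivial step is the sequential extraction of $y_{n_k} \to \hat y \in \psi(x^*)$, and that is exactly what compactness of $\psi$ at $x^*$ is tailored to supply; everything else is a verbatim repetition of the finite-dimensional argument.
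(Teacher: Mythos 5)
Your proof is correct and follows essentially the same route as the paper's: contradiction via a sequence $x_n \to x^*$ with $F_o(x_n) < F_o(x^*)$, selection of $y_n \in \psi(x_n)$ below $F_o(x^*)$, compactness of $\psi$ to extract $y_{n_k} \to \hat{y} \in \psi(x^*)$, lower semicontinuity of $F$, and the same two-case split on $\hat{y} = y^*$ or not. One small correction of attribution: the identity $F(x^*,y^*) = F_o(x^*)$ is not ``implicit in the setup'' of a local standard optimistic solution but is a consequence of the strict-minimizer hypothesis, since $F(x^*,y^*) < F(x^*,y)$ for all $y^* \neq y \in \psi(x^*)$ forces the infimum defining $F_o(x^*)$ to be attained at $y^*$.
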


\begin{proof}
It is given that $(x^*,y^*)$ is a local standard optimistic solution to the problem (\ref{bilevel real upper}). Also, by assumption, $F_o(x^*)=F(x^*,y^*)$. Suppose, if possible, that $x^*$ is not a local real optimistic solution to the problem (\ref{bilevel real upper}). Then we can construct a sequence $\lbrace x_n\rbrace_n$ in $X$ such that $F_o(x_n)<F_o(x^*)$ for all $n\in\mathbb{N}$. Now by definition of $F_o$, for each $n\in\mathbb{N}$ we can find an $y_n\in \psi(x_n)$ such that  $F_o(x_n)\leq F(x_n,y_n)<F_o(x^*)$. Now, since $\psi$ is assumed to be compact, without loss of generality, assume that $y_n\rightarrow \hat{y}$ for some $\hat{y}\in \psi(x^*)$. Now $(x_n,y_n)\rightarrow (x^*,\hat{y})$ and $F(x_n,y_n)<F_o(x^*)$. Since $F$ is lower semicontinuous, we get $F(x^*,\hat{y})\leq F_o(x^*)$. If $\hat{y}\neq y^*$, we get a contradiction since $F(x^*,y^*)<F(x^*,y)$ for all $y^*\neq y \in \psi(x^*)$ and $\hat{y}\in \psi(x^*)$. If $\hat{y}= y^*$, using the same argument as in the proof of the last proposition, we again get a contradiction that $(x^*,y^*)$ is a local standard optimistic solution to the problem (\ref{bilevel real upper}). Therefore, our assumption cannot be true. So $x^*$ is a local real optimistic solution to the problem (\ref{bilevel real upper}).
\end{proof}

We must mention that the strict minimizer assumption in Propositions \ref{standard to real local finite dimension} and \ref{standard to real local} is only a sufficient condition, as can be seen from the following example.

\begin{example}
Consider the lower-level problem given by 
\begin{gather*}
    \underset{y}{\text{minimize}} \;f(x,y)=xy \\ \text{subject to } y\in [0,1] \text{ and } x,y \in \mathbb{R}, \nonumber
\end{gather*}

and the upper-level problem is given by 
\begin{gather*}
    \underset{x}{\text{`minimize'}} \; F(x,y)=\lvert x \rvert \\ \text{subject to } x\in [-1,1], y\in \psi(x). \nonumber
\end{gather*}
Here again, the lower-level solution set map $\psi$ turns out to be
  $$ \psi(x)=\begin{cases}
      \{0\} & \text{ if } 0<x\leq 1\\
      \{1\} & \text{ if } -1\leq x<0\\
      [0,1] & \text{ if } x=0\\
  \end{cases}.$$
Also, since $F$ does not depend on $y$, $F_o(x)=F(x)$ for all $x\in [-1,1]$.
Here, it can be seen that $(0,0)$ is a local standard optimistic solution, and $0$ is a local real optimistic solution as well. However, because of nondependency of $F$ on $y$, $F_o(0)=0=F(0,y)$ for all $y\in[0,1]=\psi(0)$. This shows that the uniqueness assumption in Proposition \ref{standard to real local} is only sufficient and not necessary. 
\end{example}

It also must be mentioned that the condition that the solution map $\psi$ is compact in Proposition \ref{standard to real local} can be achieved under reasonable assumptions, as we show below. First, we recall the famous Berge's Maximum theorem, which we quote below without proof from \cite{Ausubel1993generalizedmaximumtheorem}, but in the context of normed linear spaces.

\begin{theorem}
Let $X$ and $Y$ be two normed linear spaces. Let $h:X\times Y\rightarrow \mathbb{R}$ be a continuous function and let $\gamma:X\rightarrow 2^Y$ be a continuous set-valued map that is nonempty and compact-valued. Then
\begin{itemize}
    \item[(i)] the function $m:X\rightarrow \mathbb{R}$ defined by $m(x):=\max \lbrace h(x,y) \mid y\in \gamma(x)\rbrace$ is continuous.
    \item[(ii)] the set-valued map $M:X\rightarrow 2^Y$ defined by $M(x):=\lbrace y\in Y \mid h(x,y)=m(x)\rbrace$ is upper continuous, nonempty and compact-valued.  
\end{itemize}
\end{theorem}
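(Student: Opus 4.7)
My plan is to first show that $m$ is well-defined, then establish its continuity by proving upper and lower semicontinuity separately, and finally derive the three properties of $M$. Well-definedness is immediate: for each fixed $x$, the map $y \mapsto h(x,y)$ is continuous on the nonempty compact set $\gamma(x)$, so the maximum is attained. This also gives nonemptiness of $M(x)$, and compactness of $M(x)$ follows because $M(x)$ is the preimage of the singleton $\{m(x)\}$ under the continuous map $h(x,\cdot)$ intersected with the compact set $\gamma(x)$, i.e. $M(x)$ is closed in a compact set.

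For upper semicontinuity of $m$ at a point $x_0$, I would take any sequence $x_n \to x_0$ and select $y_n \in M(x_n) \subseteq \gamma(x_n)$. The key tool is that an upper continuous, compact-valued map into a metric space is \emph{compact} in the sense of Definition 5(vii) in the paper: every such sequence $\{y_n\}$ admits a subsequence $y_{n_k} \to y_0 \in \gamma(x_0)$. Continuity of $h$ then gives $m(x_{n_k}) = h(x_{n_k}, y_{n_k}) \to h(x_0, y_0) \le m(x_0)$, and since this subsequence argument can be applied to any subsequence of $\{m(x_n)\}$, we conclude $\limsup_{n} m(x_n) \le m(x_0)$. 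For lower semicontinuity, pick any $y_0 \in M(x_0)$; by lower continuity of $\gamma$ (applied with neighborhoods of $y_0$, using the sequence characterization), there exists a sequence $y_n \in \gamma(x_n)$ (for large $n$) with $y_n \to y_0$. Then $m(x_n) \ge h(x_n, y_n) \to h(x_0, y_0) = m(x_0)$, giving $\liminf_{n} m(x_n) \ge m(x_0)$. Combining these yields continuity of $m$.

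For upper continuity of $M$, I would argue by contradiction. Suppose $V$ is open in $Y$ with $M(x_0) \subseteq V$, but there is no neighborhood of $x_0$ mapped into $V$; then there exists $x_n \to x_0$ and $y_n \in M(x_n)$ with $y_n \notin V$. Since $y_n \in \gamma(x_n)$ and $\gamma$ is upper continuous and compact-valued, extract a subsequence $y_{n_k} \to y_0 \in \gamma(x_0)$. As $Y \setminus V$ is closed, $y_0 \notin V$. But continuity of $h$ combined with continuity of $m$ (just established) gives $h(x_0, y_0) = \lim_k h(x_{n_k}, y_{n_k}) = \lim_k m(x_{n_k}) = m(x_0)$, so $y_0 \in M(x_0) \subseteq V$, a contradiction.

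The main obstacle is the extraction of a convergent subsequence from $\{y_n\}$ in the normed space setting: compactness of each individual $\gamma(x_n)$ is not enough, and one must invoke the property (vii) that upper continuity plus compact values yields sequential compactness of images along converging sequences. Once this is accepted, the rest of the argument is a clean combination of continuity of $h$, the two directions of continuity of $\gamma$, and the compactness of level sets.
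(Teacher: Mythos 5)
Your proposal cannot be compared against an in-paper argument, because the paper gives none: the theorem is quoted explicitly ``without proof'' from the cited reference (Ausubel--Deneckere), serving only as a tool for the theorem that follows it. Judged on its own, your proof is correct and is essentially the classical sequential proof of Berge's maximum theorem, adapted to the metric setting of normed spaces. Three remarks. First, your ``key tool'' --- that an upper continuous, compact-valued map satisfies the sequential compactness property of Definition (vii) --- is precisely the equivalence the paper itself asserts (also without proof) in Remark 1, so invoking it is legitimate; but note it has a short self-contained proof you could include: if no subsequence of $y_n\in\gamma(x_n)$ converged to a point of $\gamma(x_0)$, then every $z\in\gamma(x_0)$ would admit a ball $B(z,\epsilon_z)$ containing only finitely many $y_n$; finitely many such balls cover the compact set $\gamma(x_0)$, and their union is an open $V\supseteq\gamma(x_0)$ containing only finitely many $y_n$, contradicting $\gamma(x_n)\subseteq V$ for large $n$ by upper continuity. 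Second, your lower-semicontinuity step uses the sequential characterization of lower continuity (for $x_n\to x_0$ and $y_0\in\gamma(x_0)$ one can choose $y_n\in\gamma(x_n)$ with $y_n\to y_0$); this is valid in metric spaces and matches the characterization the paper cites from G\"opfert et al., so it is fine, though it deserves explicit mention as a lemma. Third, a point in your favor: the paper's statement literally defines $M(x)=\lbrace y\in Y \mid h(x,y)=m(x)\rbrace$ without requiring $y\in\gamma(x)$, and as written part (ii) is false (take $h$ independent of $y$, so that $M(x)=Y$, which is neither compact nor upper continuous in any useful sense); your proof implicitly and correctly reads $M(x)$ as the argmax set $\lbrace y\in\gamma(x) \mid h(x,y)=m(x)\rbrace$, which is the intended statement and the one actually used later in the paper.
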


\begin{theorem}
Consider the bilevel programming problem (\ref{bilevel real upper}). Assume that the lower-level objective function $f$ is jointly continuous in $(x,y)$ and the set-valued constraint set mapping $x\mapsto K(x)$ is continuous, nonempty, and compact-valued. Assume the upper-level function $F$ is lower semicontinuous in $(x,y)$. Then, standard and real optimistic formulations are equivalent (locally and globally) in the sense that one's solutions correspond to another.
\end{theorem}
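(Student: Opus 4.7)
The plan is to reduce this theorem to a straightforward assembly of previously established results, with Berge's Maximum Theorem serving as the bridge that supplies the missing regularity of the lower-level solution map $\psi$. First, I would apply Berge's Maximum Theorem (the just-stated version, applied to $-f$ so the outer operation becomes a minimum) to the parametrized lower-level problem. Since $f$ is jointly continuous and $K$ is continuous, nonempty, and compact-valued, the conclusion is that $\psi(x)=\arg\min\{f(x,y):y\in K(x)\}$ is nonempty, compact-valued, and upper continuous on $X$. By the remark following the definition of compactness of a set-valued map, this is exactly the property that $\psi$ is \emph{compact} in the sense used in the paper.

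Second, I would establish value attainment in the inner minimization. For each $x\in X$, $\psi(x)$ is nonempty and compact, and the slice $y\mapsto F(x,y)$ is lower semicontinuous on this compact set; therefore the infimum $F_o(x)=\inf_{y\in\psi(x)}F(x,y)$ is attained at some $y\in\psi(x)$. This is the precise hypothesis needed to invoke Proposition \ref{real to standard local global}.

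With these two ingredients in place, the equivalence breaks up into four implications. Globally, if $x^*$ is a real optimistic solution, pick any $y^*\in\psi(x^*)$ with $F(x^*,y^*)=F_o(x^*)$ (which exists by value attainment); Proposition \ref{real to standard local global} then delivers $(x^*,y^*)$ as a global standard optimistic solution. Conversely, Proposition \ref{existing global standard to global real} gives the other direction without further hypotheses. Locally, the same choice of $y^*$ together with Proposition \ref{real to standard local global} turns a local real optimistic $x^*$ into a local standard optimistic pair $(x^*,y^*)$. For the remaining implication from local standard optimistic to local real optimistic, I would appeal to Proposition \ref{standard to real local}: compactness of $\psi$ (already secured) plus the lower semicontinuity of $F$ are exactly its assumptions.

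The main obstacle is this last implication. Proposition \ref{standard to real local} requires $y^*$ to be a \emph{strict} minimizer of $F(x^*,\cdot)$ over $\psi(x^*)$, and Example \ref{example local standard but not local real} warns that without some such control the implication can genuinely fail. Thus the correspondence for local solutions should be read in the form "every local real optimistic $x^*$ yields a local standard optimistic pair $(x^*,y^*)$, and every local standard optimistic pair in which $y^*$ is the strict minimizer of $F(x^*,\cdot)$ over $\psi(x^*)$ returns a local real optimistic $x^*$," which is the most one can hope for in view of the example. The proof itself would simply note that Berge's theorem has promoted the abstract hypothesis "$\psi$ compact" of Proposition \ref{standard to real local} into a consequence of natural continuity assumptions on the problem data, after which nothing remains to be done beyond citing the earlier propositions.
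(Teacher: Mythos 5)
Your proposal follows the same route as the paper's own proof: apply Berge's maximum theorem to $-f$ to conclude that $\psi$ is nonempty, compact-valued and upper continuous (hence a compact map in the paper's terminology), and then assemble Propositions \ref{real to standard local global}, \ref{existing global standard to global real} and \ref{standard to real local}. Your additional step---attainment of $F_o(x)$ because a lower semicontinuous function attains its infimum on the nonempty compact set $\psi(x)$---is left implicit in the paper, but it is exactly what is needed to invoke Proposition \ref{real to standard local global}, so making it explicit is an improvement rather than a deviation.

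The obstacle you flag in the last implication is a genuine defect of the theorem as stated, not a shortcoming of your argument. The paper's proof simply cites Proposition \ref{standard to real local} without addressing its strict-minimizer hypothesis, which does not follow from the continuity assumptions. In fact, the paper's own Example \ref{example local standard but not local real} (lower level $f(x,y)=xy$ with $K(x)=[0,1]$, upper level $F(x,y)=x$ on $X=[-1,1]$) satisfies every hypothesis of the theorem: $f$ is jointly continuous, $K$ is constant and hence continuous, nonempty and compact-valued, and $F$ is continuous, hence lower semicontinuous. Yet $(0,0)$ is a local standard optimistic solution while $x=0$ is not a local real optimistic solution. So the local half of the claimed equivalence is false as literally stated; what survives is the global equivalence, the passage from local real optimistic solutions to local standard optimistic pairs, and the conditional converse under the strict-minimizer proviso. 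Your weakened reading of the local correspondence is therefore the correct conclusion, and the theorem (or its proof) would need to be amended accordingly, either by restricting ``locally'' to the real-to-standard direction or by adding the strict-minimizer assumption to the hypotheses.
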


\begin{proof}
For $x\in X$, define $m(x)= \max \lbrace -f(x,y) \mid y\in K(x)\rbrace$ and $n(x)= \min \lbrace f(x,y) \mid y\in K(x)\rbrace$. Then $m(x)=-n(x)$. Recall that $\psi(x)$ is the solution set of the lower-level problem, that is, $\psi(x)=\lbrace y\in K(x) \mid f(x,y)=n(x) \rbrace$ for all $x\in X$. Define $\xi(x)=\lbrace y\in K(x) \mid -f(x,y)=m(x) \rbrace$. Then $\xi(x)=\psi(x)$. Now $f$ continuous means, $-f$ is also continuous, and hence, from Berge's maximum theorem, $\psi$ is upper continuous, nonempty, and compact-valued; that is, it is a compact map. Then, from Propositions \ref{real to standard local global}, \ref{existing global standard to global real}, and \ref{standard to real local}, we see that the solutions of standard and real optimistic formulations correspond to one another. 
\end{proof}

 Now, we start exploring how different existing notions of solutions to bilevel programming problems are related to the solutions of the set-valued formulation (\ref{bilevel set-valued upper}). Few such results can be found in \cite{pileckathesis,Zemkohoillposed} under the value attainment assumption. In particular, the standard optimistic formulation has not been considered in great detail in the above papers. We consider that case here and start with the connection between the standard optimistic solution and the solution of the set-valued formulation.

\begin{proposition}\label{global standard to global l}
Consider the usual ordering on $\mathbb{R}$ induced by the ordering cone $C=\mathbb{R}_+$. Let $(x^*,y^*)$ be a global standard optimistic solution to the problem (\ref{bilevel real upper}). Then $x^*$ is a global $l$-minimal solution and $(x^*,F(x^*,y^*))$ is a global vector solution of (\ref{bilevel set-valued upper}).
\end{proposition}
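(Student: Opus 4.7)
The plan is to unpack both conclusions directly from the definition of a global standard optimistic solution, which gives us the key inequality $F(x,y) \geq F(x^*,y^*)$ for every $(x,y) \in \mathrm{Graph}(\psi)$. This single inequality is strong enough to drive both claims, so most of the work is checking that the definitions line up.

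For the vector-solution part, I would first observe that since $y^* \in \psi(x^*)$, the value $F(x^*,y^*)$ genuinely belongs to $G(x^*) := F(x^*,\psi(x^*))$. Next, the total image set is
\begin{equation*}
G(X) \;=\; \bigcup_{x \in X} F(x,\psi(x)) \;=\; \{\, F(x,y) : (x,y)\in \mathrm{Graph}(\psi)\,\},
\end{equation*}
and the standard optimistic condition says exactly that $F(x^*,y^*) \leq t$ for every $t \in G(X)$. Hence $F(x^*,y^*)$ is the minimum, and therefore a minimal, element of $G(X)$ with respect to $\leq_{\mathbb{R}_+}$. This makes $(x^*,F(x^*,y^*))$ a global vector solution.

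For the $l$-minimality, I would prove the stronger statement that $G(x^*) \leq^{l}_{C} G(x)$ holds for \emph{every} $x \in X$, from which the $l$-minimality condition follows trivially. Fix $x \in X$ and take any $t \in G(x)$, so $t = F(x,y)$ for some $y \in \psi(x)$. By the standard optimistic hypothesis, $t = F(x,y) \geq F(x^*,y^*)$, so writing $c := t - F(x^*,y^*) \geq 0$ we get
\begin{equation*}
t \;=\; F(x^*,y^*) + c \;\in\; G(x^*) + \mathbb{R}_{+}.
\end{equation*}
Therefore $G(x^*) + \mathbb{R}_+ \supseteq G(x)$, i.e.\ $G(x^*) \leq^{l}_{C} G(x)$. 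Consequently, whenever the hypothesis $G(x)\leq^{l}_{C} G(x^*)$ of the $l$-minimal definition is triggered, its conclusion is automatically available, so $x^*$ is a global $l$-minimal solution.

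There is no serious obstacle here; the only point that needs care is the bookkeeping that $F(x^*,y^*)$ serves as a global infimum of the image set \emph{and} already lies in $G(x^*)$, which is precisely what distinguishes the standard optimistic formulation from the real optimistic one and explains why the implication goes through without any value-attainment hypothesis beyond what the definition itself provides.
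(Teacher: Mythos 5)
Your proposal is correct and follows essentially the same route as the paper's own proof: both extract the inequality $F(x^*,y^*)\leq F(x,y)$ on $\mathrm{Graph}(\psi)$ from the definition, use it to show the stronger fact that $F(x^*,\psi(x^*))\leq^l_C F(x,\psi(x))$ for \emph{every} $x\in X$, and observe that $F(x^*,y^*)$ is the minimum of the union $\bigcup_{x\in X}F(x,\psi(x))$. The only difference is that you spell out the verification of the $l$-relation ($G(x^*)+\mathbb{R}_+\supseteq G(x)$) explicitly, which the paper leaves implicit.
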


\begin{proof}
Since  $(x^*,y^*)$ is a global standard optimistic solution to the problem (\ref{bilevel real upper}), we have $F(x^*,y^*)\leq F(x,y)$ for all $x\in X$ and $(x,y)\in Graph(\psi)$. This implies $F(x^*,\psi(x^*))\leq_C^l F(x,\psi(x))$ for all $x\in X$. Therefore $x^*$ is a global $l$-minimal solution of (\ref{bilevel set-valued upper}). Also we get $F(x^*,y^*)\in F(x^*,\psi(x^*))$ is a minimum point of $\underset{x
\in X}{ \bigcup }F(x,\psi(x))$. Therefore $(x^*,F(x^*,y^*))$ is a global vector solution of (\ref{bilevel set-valued upper}).
\end{proof}

The situation becomes difficult for the local solution as can be seen from Example \ref{example local standard but not local real}. There, the point $x=0$ is not a local $l$-minimal solution of the corresponding set-valued formulation.

The following proposition illustrates one situation to go from a local standard optimistic solution to a local $l$-minimal solution of the set-valued formulation.

\begin{proposition}\label{local standard to local l as well local real}
Consider the usual ordering on $\mathbb{R}$ induced by the ordering cone $C=\mathbb{R}_+$. Let $(x^*,y^*)$ be a local standard optimistic solution to the problem (\ref{bilevel real upper}). Then $x^*$ is a local $l$-minimal solution of (\ref{bilevel set-valued upper}) only if $x^*$ is a local real optimistic solution to the problem (\ref{bilevel real upper}).
\end{proposition}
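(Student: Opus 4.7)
The plan is to proceed by contrapositive: assuming $x^*$ fails to be a local real optimistic solution, I will exhibit a nearby point that witnesses the failure of local $l$-minimality. The key preliminary step is to unpack the set order $\leq^l_C$ for $C = \mathbb{R}_+$. Since $A \leq^l_C B$ holds iff $B \subseteq A + C$, in our scalar setting this amounts to the simple statement that for every $b \in B$ there exists $a \in A$ with $a \leq b$. In particular, $F(x,\psi(x)) \leq^l_C F(x^*, \psi(x^*))$ is equivalent to requiring that for every $y' \in \psi(x^*)$ there exists $y \in \psi(x)$ with $F(x,y) \leq F(x^*, y')$.

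Let $\epsilon_1 > 0$ be the radius supplied by the local $l$-minimality of $x^*$. If $x^*$ were not a local real optimistic solution, I could pick $x \in X \cap B(x^*, \epsilon_1)$ with $F_o(x) < F_o(x^*)$. Then for any $y' \in \psi(x^*)$ one has $F(x^*, y') \geq F_o(x^*) > F_o(x)$, so by definition of the infimum some $y \in \psi(x)$ satisfies $F(x,y) < F(x^*,y')$; this establishes $F(x,\psi(x)) \leq^l_C F(x^*,\psi(x^*))$. Local $l$-minimality then forces $F(x^*, \psi(x^*)) \leq^l_C F(x,\psi(x))$, meaning that every $y \in \psi(x)$ admits some $y' \in \psi(x^*)$ with $F(x^*, y') \leq F(x,y)$. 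Therefore $F(x,y) \geq F_o(x^*)$ for every $y \in \psi(x)$, whence $F_o(x) \geq F_o(x^*)$, giving the desired contradiction.

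The main delicate point I expect is the careful bookkeeping between the set-valued order and the infimum language, specifically ensuring that the strict inequality $F_o(x) < F_o(x^*)$ is enough to produce the $l$-relation $F(x,\psi(x)) \leq^l_C F(x^*,\psi(x^*))$ without requiring attainment of the inner infimum at either $x$ or $x^*$. Incidentally, the argument as sketched does not appear to use the local standard optimistic hypothesis on $(x^*, y^*)$ in any essential way; that hypothesis seems to set the ambient context in which the set-valued and real optimistic formulations are being compared rather than to enter the deduction itself.
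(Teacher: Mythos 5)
Your proof is correct and is essentially the paper's own argument: the paper proves this proposition by deferring to Proposition \ref{l u and real optimistic pessimistic}, whose proof of the local $l$-minimal case is exactly your contradiction argument (producing the relation $F(x,\psi(x))\leq^l_C F(x^*,\psi(x^*))$ from $F_o(x)<F_o(x^*)$ via the definition of infimum, and then invoking local $l$-minimality to force $F_o(x^*)\leq F_o(x)$). Your closing observation that the local standard optimistic hypothesis on $(x^*,y^*)$ plays no essential role is also accurate, and it is precisely why the paper can reduce this statement to that more general proposition.
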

\begin{proof}
The proof will follow from Proposition \ref{l u and real optimistic pessimistic} that we derive next.
\end{proof}

The above proposition illustrates that the conditions needed to go from a local standard optimistic solution to a local $l$-minimal solution are the same as to go to a local real optimistic solution. 

We now see how real optimistic and real pessimistic solutions are connected to the solution of the set-valued formulation.

\begin{proposition}\label{l u and real optimistic pessimistic}
Consider the usual ordering on $\mathbb{R}$ induced by the ordering cone $C=\mathbb{R}_+$. Then every global (local) $l$-minimal solution of (\ref{bilevel set-valued upper}) is a global (local) real optimistic solution to the problem (\ref{bilevel real upper}). Similarly, every global (local) $u$-minimal solution of (\ref{bilevel set-valued upper}) is a global (local) real pessimistic solution to the problem (\ref{bilevel real upper}). Converses hold when $F(x,\psi(x))$ are closed valued for all $x\in X$. 
\end{proposition}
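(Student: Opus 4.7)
The plan is to translate the set-order relations into infimum/supremum comparisons on $\mathbb{R}$ and then argue by contradiction for the forward direction, using closedness for the converse. For $A,B\subseteq\mathbb{R}$ with $C=\mathbb{R}_+$, the relation $A\leq^l_C B$ says that for every $b\in B$ there exists $a\in A$ with $a\leq b$; this already forces $\inf A\leq \inf B$. If additionally $A$ is closed and bounded below, its infimum lies in $A$ and the converse implication is available as well. Analogous statements hold with $\sup$ and $\leq^u_C$. These two observations drive the whole proof.

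For the first (unconditional) direction, suppose $x^*$ is a global $l$-minimal solution of (\ref{bilevel set-valued upper}) but not a global real optimistic solution; then there exists $x'\in X$ with $F_o(x')<F_o(x^*)$. I would show $F(x',\psi(x'))\leq^l_C F(x^*,\psi(x^*))$: given any $b\in F(x^*,\psi(x^*))$, one has $b\geq F_o(x^*)>F_o(x')=\inf F(x',\psi(x'))$, so by the definition of infimum there exists $a\in F(x',\psi(x'))$ with $a<b$. The $l$-minimality of $x^*$ now forces the reverse relation $F(x^*,\psi(x^*))\leq^l_C F(x',\psi(x'))$, which upon taking infima on both sides yields $F_o(x^*)\leq F_o(x')$, contradicting $F_o(x')<F_o(x^*)$. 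The $u$-minimal/real pessimistic implication is exactly dual: assume $F_p(x')<F_p(x^*)$, verify $F(x',\psi(x'))\leq^u_C F(x^*,\psi(x^*))$ using that for every $a\in F(x',\psi(x'))$, $a\leq F_p(x')<\sup F(x^*,\psi(x^*))$ produces some $b\in F(x^*,\psi(x^*))$ with $b\geq a$, and then derive a contradiction.

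For the converses under the closed-valued hypothesis: suppose $x^*$ is a global real optimistic solution and $F(x,\psi(x))\leq^l_C F(x^*,\psi(x^*))$ for some $x\in X$. Taking infima gives $F_o(x)\leq F_o(x^*)$, which combined with real optimality forces $F_o(x)=F_o(x^*)$. To prove $F(x^*,\psi(x^*))\leq^l_C F(x,\psi(x))$, pick any $b\in F(x,\psi(x))$; then $b\geq F_o(x)=F_o(x^*)$. If $F_o(x^*)$ is finite, the closedness of $F(x^*,\psi(x^*))$ ensures $F_o(x^*)\in F(x^*,\psi(x^*))$ and we take $a:=F_o(x^*)$; if $F_o(x^*)=-\infty$, closedness of $F(x^*,\psi(x^*))$ in $\mathbb{R}$ implies it is unbounded below, so such an $a\leq b$ still exists. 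The dual argument (with suprema and the $u$-relation) handles the real pessimistic converse.

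The local versions follow by literally the same arguments performed inside a ball $B(x^*,\epsilon)$: whichever $\epsilon$ makes $x^*$ a local $l$-minimal (resp.\ real optimistic) solution is the same $\epsilon$ that certifies the other notion, because all comparisons above are pointwise in $x$. The main obstacle, and the reason the closedness hypothesis is needed only for the converse, is precisely the attainment issue: without closedness, the equality of infima does not guarantee that some element of $F(x^*,\psi(x^*))$ sits below every element of $F(x,\psi(x))$, which is what the $l$-relation demands. Consequently one should be careful to avoid quietly assuming $\inf$ or $\sup$ is attained anywhere in the forward direction, and to explicitly invoke closedness (handling the $\pm\infty$ cases separately) in the converse.
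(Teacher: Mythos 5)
Your proposal is correct and takes essentially the same route as the paper: translate the $\leq^l_C$ and $\leq^u_C$ relations into infimum/supremum comparisons, prove the forward direction by contradiction by using the definition of infimum (resp.\ supremum) to exhibit the witness that establishes the set relation and then invoking minimality, and prove the converse by the same finite/infinite case split, with closedness giving attainment of the value in the finite case. One minor quibble: in your $F_o(x^*)=-\infty$ case, the unboundedness below of $F(x^*,\psi(x^*))$ follows from the infimum being $-\infty$ itself and has nothing to do with closedness, but the fact you need there (existence of $a\leq b$) is still valid.
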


\begin{proof}
The proof follows from the observation that when $A,B$ are subsets of $\mathbb{R}$ and $C=\mathbb{R}_+$, then $A\leq_C^l B$ (or $A\leq_C^u B$) implies $\inf A \leq \inf B$ (respectively, $\sup A \leq \sup B$). We do the proof for the local part for $l$-minimal solution and leave the other parts for the readers to derive. Let $x^0$ be a local $l$-minimal solution of (\ref{bilevel set-valued upper}). Then there exists an $\epsilon >0$ such that whenever $x\in X,\; \lvert \lvert x-x^0 \rvert  \rvert <\epsilon$ and $F({x},\psi({x}))\leq_C^l F(x^0,\psi(x^0))$, we get $ F(x^0,\psi(x^0)) \leq_C^l F({x},\psi({x}))$. If possible, assume that $x^0$ is not a local real optimistic solution to the problem (\ref{bilevel real upper}). Then there must exist some $\hat{x}\in X$ with $\lvert \lvert\hat{x}-x^0\lvert \lvert<\epsilon$ such that $F_o(\hat{x})<F_o(x^0)$ (otherwise it would have been a local real optimistic solution for the same $\epsilon$ neighbourhood). Let $y^0\in \psi(x^0)$. Then $F(x^0,y^0)\geq F_o(x^0)>F_o(\hat{x})=\underset{y\in \psi(x)}{\inf}F(x,y)$ and hence, by the definition of infimum, there must exist $\hat{y}\in \psi(\hat{x})$ with $F(\hat{x},\hat{y})<F(x^0,y^0)$. This is true for any $y^0\in \psi(x^0)$ and hence $ F(\hat{x},\psi(\hat{x}))\leq_C^l F(x^0,\psi(x^0))$. But then, by the assumption that $x^0$ is a local $l$-minimal solution, we have  $F(x^0,\psi(x^0)) \leq_C^l F(\hat{x},\psi(\hat{x}))$. Hence $F_o(x^0)\leq F_o(\hat{x})$ and we arrive at a contradiction. Therefore, $x^0$ is a local real optimistic solution to the problem (\ref{bilevel real upper}). The proof of the fact that a local $u$-minimal solution of the set-valued formulation is a local pessimistic solution of the bilevel problem and the proofs of the global parts follow in a similar manner. 

We now focus on the converse. We do it for the global $u$-solution and leave other cases for the reader as they are quite similar. Assume that $F(x,\psi(x))$ are closed valued for all $x\in X$ and let $x^o\in X$ be a global pessimistic solution of the problem (\ref{bilevel real upper}). If possible assume that $x^0$ is not a global $u$-minimal solution of (\ref{bilevel set-valued upper}). Then there exists $\hat{x}\in X$ such that $ F(\hat{x},\psi(\hat{x})) \leq_C^u  F(x^0,\psi(x^0))$ but $F(x^0,\psi(x^0)) \nleq_C^u F(\hat{x},\psi(\hat{x}))$. Now $ F(\hat{x},\psi(\hat{x})) \leq_C^u  F(x^0,\psi(x^0))$ implies $F_p(\hat{x})\leq F_p(x^0)$. But then, as $x^o$ is assumed to be a global pessimistic solution of the problem (\ref{bilevel real upper}), we must have $F_p(\hat{x}) = F_p(x^0)$. Now, two cases can arise. Either $F_p(\hat{x}) = F_p(x^0)=\infty$, or it is finite. We shall show that in both cases $F(x^0,\psi(x^0)) \leq_C^u F(\hat{x},\psi(\hat{x}))$ and arrive at a contradiction. Firstly assume that $F_p(\hat{x}) = F_p(x^0)=\infty$. Then for any $y^0 \in \psi(x^0)$, $F(x^0,y^0)$ can't be an upper bound of $F(\hat{x},\psi(\hat{x}))$ and hence there exists $\hat{y}\in \psi(\hat{x})$ such that $F(x^0,y^0)<F(\hat{x},\hat{y})$. This is true for any $y^0 \in \psi(x^0)$ and hence, $F(x^0,\psi(x^0)) \leq_C^u F(\hat{x},\psi(\hat{x}))$. Now if $F_p(\hat{x}) = F_p(x^0)$ is finite, since it is assumed that  $F(x,\psi(x))$ are closed valued, there must exists $\hat{y}\in \psi(\hat{x})$ such that $F_p(\hat{x})=F(\hat{x},\hat{y})$. Now for any $y^0 \in \psi(x^0)$, $F(x^0,\psi(x^0))\leq F_p(x^0)=F_p(\hat{x})=F(\hat{x},\hat{y})$. Hence, $F(x^0,\psi(x^0)) \leq_C^u F(\hat{x},\psi(\hat{x}))$. But this contradicts our starting assumption that $F(x^0,\psi(x^0)) \nleq_C^u F(\hat{x},\psi(\hat{x}))$. Therefore, under the closedness assumption, every global pessimistic solution of the problem (\ref{bilevel real upper}) must be a global $u$-minimal solution of (\ref{bilevel set-valued upper}).

\end{proof}

It may be seen from the following example that without the closedness assumption, the converses may not be true in general.

\begin{example}
Consider the following bilevel programming problem where the lower-level problem is given by 
\begin{gather*}
    \underset{y}{\text{minimize}} \;f(x,y)=\lfloor x+y \rfloor \\ \text{subject to } y\in [0,1] \text{ and } x,y \in \mathbb{R} \nonumber
\end{gather*}

and the upper-level problem is 
\begin{gather*}
    \underset{x}{\text{`minimize'}} \; F(x,y)=\begin{cases}
        x-y & \text{if } 0\leq x <1\\
        -x & \text{if } x =1
    \end{cases} \\ \text{subject to } x\in [0,1], y\in \psi(x). \nonumber
\end{gather*}
Here $\lfloor .\rfloor$ is the standard greatest integer function.  Here, the lower-level solution set map $\psi$ can be calculated as
$$ \psi(x)=\begin{cases}
   [0,1-x) & \text{ when } 0\leq x<1\\
   [0,1) & \text{ when } x=1
\end{cases}.$$
The optimistic upper-level objective function $F_o$ turns out to be 
$$ F_o(x)=\begin{cases}
    2x-1 & \text{ when } 0\leq x<1\\
   -1 & \text{ when } x=1
\end{cases}.$$
Finally, the set-valued map $F(x,\psi(x))$ can be calculated as
$$ F(x,\psi(x))=\begin{cases}
    (2x-1,x] & \text{ when } 0\leq x<1\\
   [-1,0) & \text{ when } x=1
\end{cases}.$$
It can be seen that $0$ and $1$ are global real optimistic solutions, but only $1$ is a global $l$-minimal solution of the corresponding set-valued formulation. Subsequently, $(1,0)$ is a global standard optimistic solution. The point $0$ does not correspond to any global standard optimistic solution, which explains that value attainment is really necessary for equivalence even in the global case as we mentioned earlier.
\end{example}

\begin{proposition}\label{vector to real global}
Consider the usual ordering on $\mathbb{R}$ induced by the ordering cone $C=\mathbb{R}_+$. If $(x^*,z^*)$ is a global (local) vector solution of {\normalfont(\ref{bilevel set-valued upper})} then $x^*$ is a global (local) real optimistic solution to the problem {\normalfont(\ref{bilevel real upper})} and there exists $y^*\in \psi(x^*)$ such that  $(x^*,y^*)$ is a global (local) standard optimistic solution to the problem (\ref{bilevel real upper}).
\end{proposition}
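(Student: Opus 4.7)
The plan is to unpack the definition of a vector solution in the scalar setting $Z=\mathbb{R}$, $C=\mathbb{R}_+$, where minimality of a point in a subset of $\mathbb{R}$ with respect to $\leq_C$ is simply being the minimum of that subset. Once the vector solution condition is rewritten as a pointwise inequality of $F$-values, the standard optimistic conclusion is immediate from the definitions, and the real optimistic conclusion either follows from Proposition \ref{existing global standard to global real} (globally) or from a short infimum argument (locally).

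For the global case, since $(x^*,z^*)$ is a global vector solution of (\ref{bilevel set-valued upper}), there exists $y^* \in \psi(x^*)$ with $z^* = F(x^*,y^*)$, and $z^*$ is a minimal element of $\bigcup_{x \in X} F(x,\psi(x)) \subseteq \mathbb{R}$. Minimality in $(\mathbb{R}, \leq_{\mathbb{R}_+})$ coincides with being the minimum, so $F(x^*,y^*) \leq F(x,y)$ for every $x \in X$ and every $y \in \psi(x)$. This is exactly the definition of a global standard optimistic solution, and then Proposition \ref{existing global standard to global real} yields that $x^*$ is a global real optimistic solution.

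For the local case, the local vector solution hypothesis supplies an $\epsilon > 0$ such that $z^* = F(x^*,y^*)$ is the minimum of $\bigcup_{x \in X \cap B(x^*,\epsilon)} F(x,\psi(x))$; equivalently, $F(x^*,y^*) \leq F(x,y)$ for all $x \in X \cap B(x^*,\epsilon)$ and $y \in \psi(x)$. To obtain local standard optimality, I would use the product-norm convention $\|(x,y)-(x^*,y^*)\| = \|x-x^*\| + \|y-y^*\|$, which forces $\|x-x^*\| \leq \|(x,y)-(x^*,y^*)\|$; hence the same $\epsilon$ verifies Definition \ref{onedefinition}. For local real optimality, I first specialise the vector-solution inequality to $x = x^*$ to get $F(x^*,y^*) \leq F(x^*,y)$ for all $y \in \psi(x^*)$, which together with $F(x^*,y^*) \geq F_o(x^*)$ yields $F(x^*,y^*) = F_o(x^*)$. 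Then, for any $x \in X \cap B(x^*,\epsilon)$ and any $y \in \psi(x)$, $F(x,y) \geq F(x^*,y^*) = F_o(x^*)$; taking the infimum over $y \in \psi(x)$ produces $F_o(x) \geq F_o(x^*)$.

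There is no real obstacle in this argument; the only subtle point is keeping the two ambient norms straight, namely that the local vector-solution neighbourhood lives in $\mathcal{X}$ while local standard optimality is phrased in $\mathcal{X} \times \mathcal{Y}$. The sub-additive product norm makes the passage one-way routine, so the whole proposition reduces to an identification of minimal points with minima in the totally ordered space $\mathbb{R}$ plus an application of Proposition \ref{existing global standard to global real}.
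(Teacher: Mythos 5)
Your proposal is correct and follows essentially the same route as the paper's proof: unpack the vector-solution condition into the pointwise inequality $F(x^*,y^*)\leq F(x,y)$ over (the localized portion of) $Graph(\psi)$, read off standard optimality directly (using the sum norm on $\mathcal{X}\times\mathcal{Y}$ in the local case), and pass to real optimality by taking the infimum over $y\in\psi(x)$, respectively by invoking Proposition \ref{existing global standard to global real} globally. The only cosmetic difference is that you make explicit two steps the paper leaves implicit, namely that minimality in $(\mathbb{R},\leq_{\mathbb{R}_+})$ means being the minimum and that $F_o(x^*)=F(x^*,y^*)$.
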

\begin{proof}
Let $(x^*,z^*)$ be a global vector solution of (\ref{bilevel set-valued upper}). Then there exists $y^*\in \psi(x^*)$ such that $F(x^*,y^*)=z^*$ and $z^* = \text{Min }\underset{x\in X}{\bigcup}F(x,\psi(x))$. But this means $F(x,y) \geq F(x^*,y^*)$ for all $x\in X$ and $(x,y)\in Graph(\psi)$. Therefore $(x^*,y^*)$ is a global standard optimistic solution to the problem (\ref{bilevel real upper}), and hence $x^*$ is a global real optimistic solution to the problem (\ref{bilevel real upper}).

Now, let $(x^*,z^*)$ be a local vector solution of (\ref{bilevel set-valued upper}). Then there exist $y^*\in \psi(x^*)$ and $\epsilon>0$ such that $F(x^*,y^*)=z^*$ and $z^* = \text{Min }\underset{x\in V}{\bigcup}F(x,\psi(x))$ where $V=X\cap B(x^*,\epsilon)$. This means $ F(x^*,y^*)\leq F(x,y)$ for all $x\in X$ with $\|x-x^*\|<\epsilon$ and $(x,y)\in Graph(\psi)$. But then $F(x,y) \geq F(x^*,y^*)$ for all  $(x,y)\in Graph(\psi)$ such that $\|(x,y)-(x^*,y^*)\|<\epsilon$. Therefore, $(x^*,y^*)$ is a local standard optimistic solution to the problem (\ref{bilevel real upper}). Also from  $F(x^*,y^*)\leq F(x,y)$ for all $x\in X$ with $\|x-x^*\|<\epsilon$ and $(x,y)\in Graph(\psi)$, we get, $F_o(x^*)\leq F_o(x)$ for all $x\in X$ with $\|x-x^*\|<\epsilon$. Therefore, $x^*$ is a local real optimistic solution to the problem {\normalfont(\ref{bilevel real upper})}. 

\end{proof}

 The last few propositions illustrate how solutions to the set-valued formulation of the bilevel problem are closely connected to the existing solution concepts of bilevel programming. 
 The vector-type solutions to the set-valued formulation are intrinsically connected with the standard optimistic solutions whereas the set-type solutions are greatly connected to the real optimistic and real pessimistic solutions, respectively. In fact, there is a one-to-one correspondence between $x$-coordinates of global standard optimistic solutions and $x$-coordinates of global vector solutions of the set-valued formulation. Although every real optimistic (real pessimistic) solution may not correspond to an $l$-minimal (respectively, a $u$-minimal) solution of the set-valued formulation, the next few results say that their existence coincides.

\begin{theorem}\label{global l-minimal and real optimistic coincidence of existence}
Consider the usual ordering on $\mathbb{R}$ induced by the ordering cone $C=\mathbb{R}_+$. Consider the bilevel programming problem (\ref{bilevel real upper}). Let $\mathcal{Q}$ denote the set of all global real optimistic solutions of the problem (\ref{bilevel real upper}) and assume that $\mathcal{Q}$ is nonempty. Define the set $$\mathcal{T}=\lbrace x\in \mathcal{Q} \mid \text{ there exists y in $\psi(x)$ with }F_o(x)=F(x,y)\rbrace.$$ 
\begin{itemize}
    \item[(I)] If $\mathcal{T}$ is empty, then every point in $\mathcal{Q}$ is a global $l$-minimal solution of (\ref{bilevel set-valued upper}).
    \item[(II)] If $\mathcal{T}$ is nonempty, then every point in $\mathcal{T}$ is a global $l$-minimal solution of (\ref{bilevel set-valued upper}) and no element of $\mathcal{Q}\setminus \mathcal{T}$ is a global $l$-minimal solution of (\ref{bilevel set-valued upper}).
\end{itemize}
\end{theorem}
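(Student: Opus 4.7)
The approach rests on two simple reductions that I will set up first. For subsets $A, B \subseteq \mathbb{R}$ under $C = \mathbb{R}_+$, the relation $A \leq_C^l B$ unpacks to: every $b \in B$ dominates some $a \in A$. This forces $\inf A \leq \inf B$, and conversely it is delivered automatically by the definition of infimum whenever the inequality $\inf A < b$ is strict for each $b \in B$. The second reduction is that $\mathcal{Q}$ is the argmin of $F_o$ over $X$, so $F_o$ is constant on $\mathcal{Q}$; I will denote this common value by $\alpha$. These two facts are the hinges of the entire argument, and the two parts of the theorem will turn out to be two ways of exploiting the strict inequality $F(x, y) > \alpha$ that $x \notin \mathcal{T}$ provides for $x \in \mathcal{Q}$.

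For Part (I), I fix $x^* \in \mathcal{Q}$ and an arbitrary $x \in X$ with $F(x, \psi(x)) \leq_C^l F(x^*, \psi(x^*))$, and aim to derive the reverse relation. The first reduction gives $F_o(x) \leq F_o(x^*) = \alpha$, and combining with the global optimality of $x^*$ yields $F_o(x) = \alpha$, placing $x$ itself in $\mathcal{Q}$. Since $\mathcal{T}$ is empty, $F_o(x) = \alpha$ is not attained, so $F(x, y) > \alpha$ strictly for every $y \in \psi(x)$. Strictness is exactly the leverage I need: for each such $y$, the definition of infimum at $x^*$ produces a $y^* \in \psi(x^*)$ with $F(x^*, y^*) < F(x, y)$, which closes the required $l$-relation.

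For Part (II), the first assertion follows by chaining two earlier results: any $x^* \in \mathcal{T}$ with its value-attaining witness $y^*$ is a global standard optimistic solution by Proposition \ref{real to standard local global}, and then Proposition \ref{global standard to global l} upgrades this to global $l$-minimality of $x^*$. The harder half is showing that no $x^* \in \mathcal{Q} \setminus \mathcal{T}$ is $l$-minimal; here I plan to construct an explicit witness of failure. Pick any $\tilde{x} \in \mathcal{T}$ with attained value $F(\tilde{x}, \tilde{y}) = \alpha$. First I verify $F(\tilde{x}, \psi(\tilde{x})) \leq_C^l F(x^*, \psi(x^*))$ using $\tilde{y}$ as a uniform dominator, which works precisely because $x^* \notin \mathcal{T}$ gives strict $F(x^*, y) > \alpha$ for every $y \in \psi(x^*)$. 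Then I observe that the reverse relation $F(x^*, \psi(x^*)) \leq_C^l F(\tilde{x}, \psi(\tilde{x}))$ would require some $y \in \psi(x^*)$ with $F(x^*, y) \leq \alpha$, which the same strict inequality forbids.

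The main obstacle throughout is the knife-edge case $F(x, y) = \alpha$ for some $y \in \psi(x)$ with $x \in \mathcal{Q}$: such a pair would place $x$ in $\mathcal{T}$ and collapse the strict-inequality argument that drives both parts. Threading the dichotomy $\mathcal{T} = \emptyset$ versus $\mathcal{T} \neq \emptyset$ is precisely what rules out this degenerate case in each setting respectively, and I expect no use of topology or continuity beyond the set-order bookkeeping already recorded in the proof of Proposition \ref{l u and real optimistic pessimistic}.
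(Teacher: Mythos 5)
Your proposal is correct and follows essentially the same route as the paper's own proof: Part (I) via the observation that the $\leq_C^l$ relation forces $F_o(x)\leq F_o(x^*)$, membership of $x$ in $\mathcal{Q}$, and then the strict non-attainment inequality combined with the definition of infimum to recover the reverse relation; Part (II) by chaining Propositions \ref{real to standard local global} and \ref{global standard to global l} for points of $\mathcal{T}$, and by using a value-attaining point of $\mathcal{T}$ as the uniform dominator witnessing that no point of $\mathcal{Q}\setminus\mathcal{T}$ can be $l$-minimal. There is no substantive difference in decomposition, lemmas used, or level of generality.
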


\begin{proof}~\begin{itemize}
    \item[(I)] Assume that $\mathcal{T}$ is empty. Let $x^0\in \mathcal{Q}$. Assume that for some $\hat{x}\in X$ we have $F(\hat{x},\psi(\hat{x}))\leq_C^l F(x^0,\psi(x^0))$. Then $F_o(\hat{x})\leq F_o(x^0)$ and since $x^0\in \mathcal{Q}$, we must have $F_o(x^0)\leq F_o(\hat{x})$, and hence $F_o(\hat{x})= F_o(x^0)$, implying that $\hat{x}\in \mathcal{Q}$. Let $\hat{y}\in \psi(\hat{x})$. Since $\mathcal{T}=\emptyset$, $\hat{x} \not\in \mathcal{T}$ and hence, $$F(\hat{x},\hat{y})>F_o(\hat{x})=F_o(x^0)=\underset{y\in \psi(x^0)}{\inf}F(x^0,y).$$ Then, by the definition of infimum, there must exist $y^0\in \psi(x^0)$ with $$F_o(x^0)\leq F(x^0,y^0)< F(\hat{x},\hat{y}). $$ Thus for any $\hat{y}\in \psi(\hat{x})$ there exists $y^0\in \psi(x^0)$ such that $ F(x^0,y^0)< F(\hat{x},\hat{y}) $. Hence, $F(x^0,\psi(x^0))\leq_C^l F(\hat{x},\psi(\hat{x}))$. This is true for any $\hat{x}\in X$ with $F(\hat{x},\psi(\hat{x}))\leq_C^l F(x^0,\psi(x^0))$. Thus, $x^0$ is a global $l$-minimal solution of (\ref{bilevel set-valued upper}).  

 \item[(II)] Now, assume that $\mathcal{T}$ be nonempty. We first show that every point in $\mathcal{T}$ is a global $l$-minimal solution of (\ref{bilevel set-valued upper}).  In fact, for any $x^0\in \mathcal{T}$, there exists $y^0\in \psi(x^0)$ such that, $F_o(x^0)=F(x^0,y^0)$. Hence, by Proposition \ref{real to standard local global}, $(x^0,y^0)$ is a global standard optimistic solution, and hence from Proposition \ref{global standard to global l}, we conclude that $x^0$ is a global $l$-minimal solution of (\ref{bilevel set-valued upper}).

 To complete the proof, we now show that no element of $\mathcal{Q}\setminus \mathcal{T}$ is a global $l$-minimal solution of (\ref{bilevel set-valued upper}). When $\mathcal{Q}\setminus \mathcal{T}=\emptyset$, it is vacuously satisfied. Therefore consider that $\mathcal{Q}\setminus \mathcal{T}\neq \emptyset$. Let $x^0\in \mathcal{Q}\setminus \mathcal{T}$. Then for any $\hat{x}\in \mathcal{T}$, since both $x^0,\hat{x}\in \mathcal{Q}$, we have $F_o(\hat{x})=F_o(x^0).$ Since, $\hat{x}\in \mathcal{T}$, there exists $\hat{y}\in \psi(\hat{x})$ such that $F_o(\hat{x})=F(\hat{x},\hat{y})$. Then, for any $y^0\in \psi(x^0)$, since $x^0\in \mathcal{Q}\setminus \mathcal{T}$, 
 $$F(x^0,y^0)>F_o(x^0)=F_o(\hat{x})=F(\hat{x},\hat{y}).$$
 But this implies $F(\hat{x},\psi(\hat{x}))\leq_C^l F(x^0,\psi(x^0))$, but $F(x^0,\psi(x^0))\not \leq_C^l F(\hat{x},\psi(\hat{x}))$. Thus  $x^0$ is not a global $l$-minimal solution of (\ref{bilevel set-valued upper})
 \end{itemize}
 \end{proof}

 We now derive a similar result between local real optimistic solutions of the bilevel problem and local $l$-minimal solutions of the set-valued formulation.

 \begin{theorem}\label{local l-minimal and real optimistic coincidence of existence}
Consider the usual ordering on $\mathbb{R}$ induced by the ordering cone $C=\mathbb{R}_+$. Consider the bilevel programming problem (\ref{bilevel real upper}). Let the problem (\ref{bilevel real upper}) have a local real optimistic solution.
\begin{itemize}
    \item[(I)] If there exists at least one local real optimistic solution $x^0$ of the problem (\ref{bilevel real upper}) such that there is one $y^0\in \psi(x^0)$ with $F_o(x^0)=F(x^0,y^0)$, then $x^0$ is a local $l$-minimal solution of (\ref{bilevel set-valued upper}).
    \item[(II)] If no local real optimistic solution $x$ has an $y\in \psi(x)$ with $F_o(x)=F(x,y)$, then every local real optimistic solution of problem (\ref{bilevel real upper}) is also a local $l$-minimal solution of (\ref{bilevel set-valued upper}).
\end{itemize}
\end{theorem}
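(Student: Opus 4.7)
The plan is to mirror the argument used in Theorem \ref{global l-minimal and real optimistic coincidence of existence}, but track the neighbourhoods carefully so that each step localises. Throughout, $C=\mathbb{R}_+$, so $A\leq_C^l B$ amounts to: every $b\in B$ dominates some $a\in A$.

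For part (I), I would invoke Proposition \ref{real to standard local global} to promote $(x^0,y^0)$ to a local standard optimistic solution, but in fact the needed conclusion is even more direct than in the global case. Letting $\epsilon'>0$ be a local real optimistic radius for $x^0$, for every $x\in X\cap B(x^0,\epsilon')$ and every $y\in \psi(x)$ we have
\[
F(x,y)\;\geq\; F_o(x)\;\geq\; F_o(x^0)\;=\;F(x^0,y^0),
\]
so the single element $F(x^0,y^0)\in F(x^0,\psi(x^0))$ lower-bounds every element of $F(x,\psi(x))$. Hence $F(x^0,\psi(x^0))\leq_C^l F(x,\psi(x))$ holds unconditionally on $X\cap B(x^0,\epsilon')$, and the defining implication for local $l$-minimality is trivially satisfied.

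For part (II), let $x^0$ be any local real optimistic solution with local radius $\epsilon'$; the claim will be that $x^0$ is local $l$-minimal with the same $\epsilon'$. I would pick $x\in X\cap B(x^0,\epsilon')$ with $F(x,\psi(x))\leq_C^l F(x^0,\psi(x^0))$. Passing to elementwise infima, this relation gives $F_o(x)\leq F_o(x^0)$, which combined with the local real optimistic property forces $F_o(x)=F_o(x^0)$. Since a small enough ball around $x$ remains inside $B(x^0,\epsilon')$, $x$ is itself a local real optimistic solution; the hypothesis of part (II) then transfers, so no $y\in\psi(x)$ attains $F_o(x)$, giving $F(x,y)>F_o(x)=F_o(x^0)$ strictly for every such $y$. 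The definition of infimum then supplies $y^0\in\psi(x^0)$ with $F(x^0,y^0)<F(x,y)$, and this yields $F(x^0,\psi(x^0))\leq_C^l F(x,\psi(x))$ as required.

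The one delicate point I foresee is the transfer of the no-value-attainment hypothesis from $x^0$ to the nearby $x$ in part (II); this is precisely where the local setting threatens to derail the global argument. Once the elementary neighbourhood-shrinking remark above certifies that any such $x$ is itself a local real optimistic solution, the hypothesis applies at $x$ and the remainder is essentially a transcription of the proof of Theorem \ref{global l-minimal and real optimistic coincidence of existence}.
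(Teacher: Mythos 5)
Your proposal is correct and follows essentially the same route as the paper's proof: part (I) is the identical one-line domination argument $F(x,y)\geq F_o(x)\geq F_o(x^0)=F(x^0,y^0)$, and part (II) uses the same chain (elementwise infima give $F_o(x)\leq F_o(x^0)$, hence equality, hence $x$ is itself a local real optimistic solution by shrinking the ball, hence non-attainment gives strict inequalities and the definition of infimum produces the dominating $y^0$), merely phrased directly rather than by contradiction. Your explicit justification of the neighbourhood-shrinking step — the transfer of local real optimality from $x^0$ to the nearby $x$ — is a point the paper's proof asserts without detail, so making it explicit is a small improvement rather than a deviation.
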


\begin{proof}
 ~   

\begin{itemize}
    \item[(I)] Consider the case that there exists one local real optimistic solution $x^0\in X$ such that there is one $y^0\in \psi(x^0)$ with $F_o(x^0)=F(x^0,y^0)$. Since $x^0$ is a local real optimistic solution of the problem (\ref{bilevel real upper}), there must exists some $\epsilon >0$ such that $F_o(x^0)\leq F_o(x)$ for all $x\in X$ with $\|x-x^0\|<\epsilon$. Then for any $x\in X$ with $\|x-x^0\|<\epsilon$ and for any $y\in \psi(x)$, $$F(x,y)\geq F_o(x) \geq F_o(x^0)=F(x^0,y^0). $$ This implies $F(x^0,\psi(x^0))\leq_C^l F({x},\psi({x}))$ for any $x\in X$ with $\|x-x^0\|<\epsilon$. Hence, $x^0$ is a local $l$-minimal solution of (\ref{bilevel set-valued upper}). 

\item[(II)] Now suppose that the other case happens, that is, no local real optimistic solution $x$ has some $y\in \psi(x)$ with $F_o(x)=F(x,y)$. Let $x^0\in X$ be a local real optimistic solution of the problem (\ref{bilevel real upper}). Then there must exists some $\epsilon >0$ such that $F_o(x^0)\leq F_o(x)$ for all $x\in X$ with $\|x-x^0\|<\epsilon$. Suppose, if possible, that $x^0$ is not a local $l$-minimal solution of (\ref{bilevel set-valued upper}). Then, for the above $\epsilon$, there exists $\hat{x}\in X$ such that $\|\hat{x}-x^0\|< \epsilon$ and $F(\hat{x},\psi(\hat{x}))\leq_C^l F(x^0,\psi(x^0))$ but $F(x^0,\psi(x^0))\nleq_C^l F(\hat{x},\psi(\hat{x}))$. Now $F(\hat{x},\psi(\hat{x}))\leq_C^l F(x^0,\psi(x^0))$ implies $F_o(\hat{x})\leq F_o(x^0)$. Since $x^0$ is assumed to be a local real optimistic solution of the problem (\ref{bilevel real upper}) with the above $\epsilon$, we must have $F_o(\hat{x})= F_o(x^0)$. Hence, $\hat{x}$ is also a local real optimistic solution of the problem (\ref{bilevel real upper}). Let $\hat{y}\in \psi(\hat{x})$. Since no local real optimistic solution has the value attainment property by our assumption, $$F(\hat{x},\hat{y})>F_o(\hat{x})=F_o(x^0)=\underset{y\in \psi(x^0)}{\inf}F(x^0,y).$$ Then, by the definition of infimum, there must exist $y^0\in \psi(x^0)$ with $F_o(x^0)\leq F(x^0,y^0) < F(\hat{x},\hat{y})$. Thus for any $\hat{y}\in \psi(\hat{x})$ there exists $y^0\in \psi(x^0)$ such that $ F(x^0,y^0)< F(\hat{x},\hat{y}) $. Hence, $F(x^0,\psi(x^0))\leq_C^l F(\hat{x},\psi(\hat{x}))$. But that is a contradiction. Hence $x^0$ must be a local $l$-minimal solution of (\ref{bilevel set-valued upper}).

\end{itemize}
\end{proof}

We now present an interesting example.
\begin{example}
Consider the following bilevel programming problem where the lower-level problem is given by 
\begin{gather*}
    \underset{y}{\text{minimize}} \;f(x,y)=xy \\ \text{subject to } y\in [0,1] \text{ and } x,y \in \mathbb{R} \nonumber
\end{gather*}

and the upper-level problem is 
\begin{gather*}
    \underset{x}{\text{`minimize'}} \; F(x,y)=\begin{cases}
        x-y & \text{if } y>\frac{1}{2}\\
        x^2+y^2 & \text{if } y\leq \frac{1}{2}
    \end{cases} \\ \text{subject to } x\in [-1,1], y\in \psi(x) . \nonumber
\end{gather*}
Here, the lower-level solution set map $\psi$ can be calculated as
$$ \psi(x)=\begin{cases}
   \{1\} & \text{ when } -1\leq x<0\\
   [0,1] & \text{ when } x=0\\
   \{0\} & \text{ when } 0< x\leq 1\\
\end{cases}.$$
The optimistic upper-level objective function $F_o$ turns out to be 
$$ F_o(x)=\begin{cases}
   x-1 & \text{ when } -1\leq x<0\\
   -1 & \text{ when } x=0\\
   x^2 & \text{ when } 0< x\leq 1\\
\end{cases}.$$
Finally, the set-valued map $F(x,\psi(x))$ can be calculated as
$$ F(x,\psi(x))=\begin{cases}
   \{x-1\} & \text{ when } -1\leq x<0\\
   [0,\frac{1}{4}]\cup[-1,-\frac{1}{2}) & \text{ when } x=0\\
   \{x^2\} & \text{ when } 0< x\leq 1\\
\end{cases}.$$
It can be seen that $x=-1$ is a global (and hence local) real optimistic solution of the bilevel problem, as well as a global (and hence local) $l$-minimal solution of the set-valued formulation. Correspondingly, $(-1,1)$ is a global (and hence local) standard optimistic solution. Also $(0,0)$ is a local standard optimistic solution of the bilevel problem (can be shown by considering a circular neighbourhood of radius say $\frac{1}{3}$ around the point $(0,0)$). Interestingly, $F_o(0)=-1$ which is achieved at $(0,-1)$. But $x=0$ is neither a local real optimistic solution of the bilevel problem nor a local $l$-minimal solution of the set-valued formulation. 
\end{example}
The above example shows that when considering local solutions, only real optimistic solutions (and not the standard optimistic ones) are closely connected with $l$-minimal solutions of the set-valued formulation.

We now derive a similar connection between real pessimistic solutions of the bilevel problem and $u$-minimal solutions of the set-valued formulation. 

\begin{theorem}\label{global u-minimal and real pessimistic coincidence of existence}
Consider the usual ordering on $\mathbb{R}$ induced by the ordering cone $C=\mathbb{R}_+$. Consider the bilevel programming problem (\ref{bilevel real upper}). Let $\hat{\mathcal{Q}}$ denote the set of all global real pessimistic solutions of the problem (\ref{bilevel real upper}) and assume that $\hat{\mathcal{Q}}$ is nonempty. Define the set $$\hat{\mathcal{T}}=\lbrace x\in \hat{\mathcal{Q}} \mid \text{ there exists y in $\psi(x)$ with }F_p(x)=F(x,y)\rbrace.$$ 
\begin{itemize}
    \item[(I)] If $\hat{\mathcal{Q}}\setminus \hat{\mathcal{T}}=\emptyset$, then every point in $\hat{\mathcal{Q}}$ is a global $u$-minimal solution of (\ref{bilevel set-valued upper}).
    \item[(II)] If $\hat{\mathcal{Q}}\setminus \hat{\mathcal{T}}\neq \emptyset$, then every point in $\hat{\mathcal{Q}}\setminus \hat{\mathcal{T}}$ is a global $u$-minimal solution of (\ref{bilevel set-valued upper}) and no element of $\hat{\mathcal{T}}$ is a global $u$-minimal solution of (\ref{bilevel set-valued upper}).
\end{itemize}
\end{theorem}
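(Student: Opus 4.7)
The plan is to mirror the structure of Theorem \ref{global l-minimal and real optimistic coincidence of existence}, with the crucial duality observation that for the pessimistic case (supremum, $\leq_C^u$), value attainment plays the \emph{opposite} role compared to the optimistic case: when two sets have the same supremum, the one attaining it is ``worse'' in the $u$-sense (its $u$-upper bound is larger-looking), whereas a set with the same supremum not attained is $u$-minimal among them. This is why $\hat{\mathcal{Q}} \setminus \hat{\mathcal{T}}$ (rather than $\hat{\mathcal{T}}$) yields $u$-minimal points in Part (II). Throughout, I will repeatedly use the elementary fact that $A \leq_C^u B$ means: for every $a \in A$ there exists $b \in B$ with $a \leq b$; in particular, this implies $\sup A \leq \sup B$.

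For Part (I), take any $x^0 \in \hat{\mathcal{Q}} = \hat{\mathcal{T}}$ and suppose $F(\hat{x},\psi(\hat{x})) \leq_C^u F(x^0,\psi(x^0))$ for some $\hat{x} \in X$. The observation above gives $F_p(\hat{x}) \leq F_p(x^0)$, and since $x^0$ is globally real pessimistic, equality holds and $\hat{x} \in \hat{\mathcal{Q}} = \hat{\mathcal{T}}$. Thus some $\hat{y} \in \psi(\hat{x})$ attains $F(\hat{x},\hat{y}) = F_p(\hat{x}) = F_p(x^0)$. Then for every $y^0 \in \psi(x^0)$ we have $F(x^0,y^0) \leq F_p(x^0) = F(\hat{x},\hat{y})$, which witnesses $F(x^0,\psi(x^0)) \leq_C^u F(\hat{x},\psi(\hat{x}))$.

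For Part (II), I first show every $x^0 \in \hat{\mathcal{Q}} \setminus \hat{\mathcal{T}}$ is global $u$-minimal. As in Part (I), the relation $F(\hat{x},\psi(\hat{x})) \leq_C^u F(x^0,\psi(x^0))$ forces $F_p(\hat{x}) = F_p(x^0)$, so $\hat{x} \in \hat{\mathcal{Q}}$. Split into two subcases: if $\hat{x} \in \hat{\mathcal{T}}$, use the attainer at $\hat{x}$ exactly as in Part (I); if $\hat{x} \in \hat{\mathcal{Q}} \setminus \hat{\mathcal{T}}$, then no attainer exists, but for each $y^0 \in \psi(x^0)$ the non-attainment at $x^0$ gives $F(x^0,y^0) < F_p(x^0) = F_p(\hat{x}) = \sup F(\hat{x},\psi(\hat{x}))$, so the definition of supremum produces $\hat{y} \in \psi(\hat{x})$ with $F(x^0,y^0) \leq F(\hat{x},\hat{y})$ (when $F_p = +\infty$ the same conclusion is immediate since $F(x^0,y^0)$ is finite). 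Either way, $F(x^0,\psi(x^0)) \leq_C^u F(\hat{x},\psi(\hat{x}))$.

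It remains to show that no $x^0 \in \hat{\mathcal{T}}$ is global $u$-minimal, under the assumption $\hat{\mathcal{Q}} \setminus \hat{\mathcal{T}} \neq \emptyset$. Pick any witness $\hat{x} \in \hat{\mathcal{Q}} \setminus \hat{\mathcal{T}}$. On the one hand, for every $\hat{y} \in \psi(\hat{x})$ non-attainment yields $F(\hat{x},\hat{y}) < F_p(\hat{x}) = F_p(x^0) = F(x^0,y^0)$ for the attaining $y^0 \in \psi(x^0)$, which gives $F(\hat{x},\psi(\hat{x})) \leq_C^u F(x^0,\psi(x^0))$. On the other hand, the same chain shows that this attaining $y^0$ has no $\hat{y} \in \psi(\hat{x})$ with $F(x^0,y^0) \leq F(\hat{x},\hat{y})$, so $F(x^0,\psi(x^0)) \not\leq_C^u F(\hat{x},\psi(\hat{x}))$, and $x^0$ fails to be $u$-minimal. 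The only delicate point throughout is the $F_p = +\infty$ case, which I will handle by observing that since $F$ is real-valued each $F(x^0,y^0)$ is finite and hence strictly less than $\sup F(\hat{x},\psi(\hat{x})) = +\infty$, so a suitable $\hat{y}$ still exists by the definition of supremum.
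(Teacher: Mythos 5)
Your proof is correct and takes essentially the same approach as the paper's: use $A \leq_C^u B \Rightarrow \sup A \leq \sup B$ to force $F_p(\hat{x}) = F_p(x^0)$, then exploit attainment (Part (I) and the negative half of Part (II)) or non-attainment at $x^0$ plus the definition of supremum (positive half of Part (II)). The only difference is cosmetic: your case split in the first half of Part (II) is redundant, since your second subcase's argument (non-attainment at $x^0$ gives $F(x^0,y^0) < F_p(x^0) = \sup F(\hat{x},\psi(\hat{x}))$, so some $\hat{y}$ dominates) works whether or not $\hat{x} \in \hat{\mathcal{T}}$, which is exactly how the paper handles it uniformly.
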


\begin{proof}
 ~   

\begin{itemize}
    \item[(I)] Consider the case that $\hat{\mathcal{Q}}\setminus \hat{\mathcal{T}}=\emptyset$. We claim that in this case every point in $\hat{\mathcal{Q}}$ is a global $u$-minimal solution of (\ref{bilevel set-valued upper}). To justify our claim, let $x^0\in \hat{\mathcal{Q}}$. Then $x^0\in \hat{\mathcal{T}}$ and hence there exists $y^0\in \psi(x^0)$ such that $F_p(x^0)=F(x^0,y^0)$. Let $\hat{x}\in X$ be such that $F(\hat{x},\psi(\hat{x}))\leq_C^u F(x^0,\psi(x^0))$. Then $F_p(\hat{x})\leq F_p(x^0)$ and since $x^0\in \hat{\mathcal{Q}}$, we must have $F_p(x^0)\leq F_p(\hat{x})$ and hence $F_p(\hat{x})=F_p(x^0)$ implying that $\hat{x}\in \hat{\mathcal{Q}}=\hat{\mathcal{T}}$. Then, by definition of $\hat{T}$, there exists $\hat{y}\in \psi(\hat{x})$ such that $F_p(\hat{x})=F(\hat{x},\hat{y})$.
Let $y\in \psi(x^0)$. Then 
$$ F(x^0,y)\leq F_p(x^0)=F_p(\hat{x})=F(\hat{x},\hat{y}). $$
This is true for any $y\in \psi(x^0)$ and hence $F(x^0,\psi(x^0)) \leq_C^u F(\hat{x},\psi(\hat{x}))$. Thus, for any $\hat{x}\in X$ whenever $F(\hat{x},\psi(\hat{x}))\leq_C^u F(x^0,\psi(x^0))$, we have $F(x^0,\psi(x^0)) \leq_C^u F(\hat{x},\psi(\hat{x}))$. Therefore, $x^0$ is a global $u$-minimal solution of (\ref{bilevel set-valued upper}). 

\item[(II)]Now, consider the case that $\hat{\mathcal{Q}}\setminus \hat{\mathcal{T}}\neq \emptyset$. In this case, we claim that every point in $\hat{\mathcal{Q}}\setminus \hat{\mathcal{T}}$ is a global $u$-minimal solution of (\ref{bilevel set-valued upper}).  Let $x^0\in \hat{\mathcal{Q}}\setminus \hat{\mathcal{T}}$. Consider any $\hat{x}\in X$ such that $F(\hat{x},\psi(\hat{x}))\leq_C^u F(x^0,\psi(x^0))$. Then $F_p(\hat{x})\leq F_p(x^0)$ and since $x^0\in \hat{\mathcal{Q}}$, we must have $F_p(x^0)\leq F_p(\hat{x})$ implying that $F_p(\hat{x})=F_p(x^0)$ and hence $\hat{x}\in \hat{\mathcal{Q}}$. Let $y^0\in \psi(x^0)$. Since $x^0 \notin \hat{T}$, 
$$F(x^0,y^0)<F_p(x^0)=F_p(\hat{x})=\underset{y\in \psi(\hat{x})}{\sup}F(\hat{x},y).$$
Then, by the definition of supremum, there exists $\hat{y}\in \psi(\hat{x})$ such that $F(x^0,y^0)<F(\hat{x},\hat{y})=F_p(\hat{x})$. Thus for any $y^0\in \psi(x^0)$, we can find $\hat{y}\in \psi(\hat{x})$ such that  $F(x^0,y^0)<F(\hat{x},\hat{y})$, and hence $F(x^0,\psi(x^0)) \leq_C^u F(\hat{x},\psi(\hat{x}))$. Thus, for any $\hat{x}\in X$ whenever $F(\hat{x},\psi(\hat{x}))\leq_C^u F(x^0,\psi(x^0))$, we have $F(x^0,\psi(x^0)) \leq_C^u F(\hat{x},\psi(\hat{x}))$. Therefore, $x^0$ is a global $u$-minimal solution of (\ref{bilevel set-valued upper}).

To complete the proof, we have to now show that in this case no element of $\hat{\mathcal{T}}$ is a global $u$-minimal solution of (\ref{bilevel set-valued upper}). If $\hat{\mathcal{T}}=\emptyset$, it is vacuously satisfied. So assume that $\hat{\mathcal{T}}\neq \emptyset$. Let $x^0\in \hat{\mathcal{T}}$. Then for any $\hat{x}\in \hat{\mathcal{Q}}\setminus \hat{\mathcal{T}}$, since both $\hat{x}$ and $x^0$ are in $\hat{\mathcal{Q}}$, $F_p(\hat{x})=F_p(x^0)$. Moreover, since $x^0\in \hat{\mathcal{T}}$, there exists $y^0\in \psi(x^0)$ such that $F_p(x^0)=F(x^0,y^0)$. Now, for any $\hat{y} \in \psi(\hat{x})$, since $\hat{x}\in \hat{\mathcal{Q}}\setminus \hat{\mathcal{T}}$, $$F(\hat{x},\hat{y})<F_p(\hat{x})=F_p(x^0)=F(x^0,y^0).$$
But this implies that $F(\hat{x},\psi(\hat{x}))\leq_C^l F(x^0,\psi(x^0))$ but $F(x^0,\psi(x^0)) \not \leq_C^u F(\hat{x},\psi(\hat{x}))$. Thus, $x^0$ is not a global $u$-minimal solution of (\ref{bilevel set-valued upper})

\end{itemize}
\end{proof}

A similar result holds true between local real pessimistic solutions and local $u$-minimal solutions of the set-valued formulation. We just write the result and leave the proof for the reader to derive.

\begin{theorem}\label{local u-minimal and real pessimistic coincidence of existence}
Consider the usual ordering on $\mathbb{R}$ induced by the ordering cone $C=\mathbb{R}_+$. Consider the bilevel programming problem (\ref{bilevel real upper}). Let the problem (\ref{bilevel real upper}) have a local real pessimistic solution.
\begin{itemize}
    \item[(I)] If for every local real pessimistic solution $x$ of the problem (\ref{bilevel real upper}) there is one $y\in \psi(x)$ with $F_p(x)=F(x,y)$, then every local real pessimistic solution of the problem (\ref{bilevel real upper}) is a local $u$-minimal solution of (\ref{bilevel set-valued upper}).
    \item[(II)] If for some local real pessimistic solution $x^0$ there is no $y\in \psi(x^0)$ with $F_p(x)=F(x^0,y)$, then $x^0$ is a local $u$-minimal solution of (\ref{bilevel set-valued upper}).
\end{itemize}
\end{theorem}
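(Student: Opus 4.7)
The plan is to transpose the proof of Theorem \ref{local l-minimal and real optimistic coincidence of existence} from the optimistic/$l$-minimal setting to the pessimistic/$u$-minimal one, driven by the dual observation that on $\mathbb{R}$ with $C=\mathbb{R}_+$, the relation $A\leq_C^u B$ says every element of $A$ is dominated above by some element of $B$, so the bookkeeping should pass through the supremal function $F_p$. The structure also parallels the global counterpart Theorem \ref{global u-minimal and real pessimistic coincidence of existence}, though the local hypotheses are packaged slightly differently.

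For part (I), I would let $x^0$ be an arbitrary local real pessimistic solution and fix the $\epsilon>0$ on which $F_p(x^0)\leq F_p(x)$ holds for all $x\in X\cap B(x^0,\epsilon)$. Given a candidate $\hat{x}\in X$ with $\|\hat{x}-x^0\|<\epsilon$ and $F(\hat{x},\psi(\hat{x}))\leq_C^u F(x^0,\psi(x^0))$, the definition of $\leq_C^u$ forces $F_p(\hat{x})\leq F_p(x^0)$, and local optimality then yields equality. To be able to apply the standing assumption of (I) at $\hat{x}$, I need $\hat{x}$ itself to be a local real pessimistic solution; this is handled by the standard shrinking-ball argument $B(\hat{x},\epsilon-\|\hat{x}-x^0\|)\cap X\subseteq B(x^0,\epsilon)\cap X$. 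The hypothesis then produces $\hat{y}\in\psi(\hat{x})$ with $F(\hat{x},\hat{y})=F_p(\hat{x})=F_p(x^0)$, and since every $y\in\psi(x^0)$ satisfies $F(x^0,y)\leq F_p(x^0)=F(\hat{x},\hat{y})$, I conclude $F(x^0,\psi(x^0))\leq_C^u F(\hat{x},\psi(\hat{x}))$.

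Part (II) is more direct. For the specific $x^0$ with no value attainment, fix $\epsilon>0$ as before and any $\hat{x}\in X$ with $\|\hat{x}-x^0\|<\epsilon$ and $F(\hat{x},\psi(\hat{x}))\leq_C^u F(x^0,\psi(x^0))$; again $F_p(\hat{x})=F_p(x^0)$. For every $y^0\in\psi(x^0)$ the lack of attainment at $x^0$ delivers the strict inequality $F(x^0,y^0)<F_p(x^0)=F_p(\hat{x})=\sup_{y\in\psi(\hat{x})}F(\hat{x},y)$, so the definition of supremum gives $\hat{y}\in\psi(\hat{x})$ with $F(\hat{x},\hat{y})>F(x^0,y^0)$. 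Running over $y^0$ yields $F(x^0,\psi(x^0))\leq_C^u F(\hat{x},\psi(\hat{x}))$, as required; the argument passes through even when $F_p(x^0)=+\infty$, since the characterisation of supremum still produces the needed $\hat{y}$.

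The only step needing real care is the propagation of local pessimistic optimality from $x^0$ to the competitor $\hat{x}$ in part (I); once that is in place, the rest is essentially a mechanical dualisation (inf $\to$ sup, $\leq_C^l \to \leq_C^u$) of the local $l$-minimal proof.
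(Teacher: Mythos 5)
Your proof is correct, and it is precisely the argument the paper intends: the paper states this theorem without proof (``we just write the result and leave the proof for the reader to derive''), expecting exactly the dualisation of Theorem~\ref{local l-minimal and real optimistic coincidence of existence} along the lines of Theorem~\ref{global u-minimal and real pessimistic coincidence of existence}, which is what you carry out. Your explicit shrinking-ball step, showing that the competitor $\hat{x}$ inherits local pessimistic optimality from $x^0$ so that the attainment hypothesis of part (I) can be applied at $\hat{x}$, is the right way to justify a step the paper leaves implicit even in its written $l$-minimal proof, and your handling of the case $F_p(x^0)=+\infty$ in part (II) is likewise sound.
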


\section{Conclusions}\label{conclusion}
Let us now summarize what we believe to be the key ideas discussed in this article. Our main goal was to show that the formulation of the bilevel problem as a set-valued optimization problem is deeply connected with the optimistic and pessimistic formulation of the problem. The set-valued formulation is the most natural single-level reformulation of the bilevel programming problem if the lower-level problem does not have a unique solution for each upper-level decision variable. The ideas of the optimistic and the pessimistic formulations came in solely to avoid the set-valued optimization formulation of the bilevel programming problem. Though both the optimistic and pessimistic approaches appear very logical and possibly a natural thing to do, it was not clear if at all they have some relationship with the set-valued formulation. Such a relationship would further strengthen the use of the optimistic and pessimistic formulation.

In this article, we show that the $l$-minimal and $u$-minimal solutions of the set-valued formulation are deeply linked with the real optimistic and real pessimistic formulation, respectively. 
In effect, we show that computing a real optimistic solution or a real pessimistic solution corresponds to the computing of an $l$-minimal or a $u$-minimal solution of the set-valued optimization formulation. This also possibly points out the fact that these two formulations are the only possible ways to avoid the set-valued situation while computing the solution of a bilevel optimization. Our study thus uses the set-valued optimization formulation as an explanatory tool to get deeper insights into the nature of the optimistic and pessimistic formulations of the bilevel problems.

\backmatter

\bmhead{Acknowledgments} The first author thanks SERB (File No PDF/2022/001905) for the financial support.

\section*{Declarations}
\begin{itemize}
\item Conflict of interest/Competing interests : There is no conflict of interest among the authors.
\item Availability of data and materials : NA
\item Code availability : NA
\item Authors' contributions : All the authors have contributed equally.
\end{itemize}


\bibliography{sn-bibliography}


\begin{thebibliography}{35}
\ifx \bisbn   \undefined \def \bisbn  #1{ISBN #1}\fi
\ifx \binits  \undefined \def \binits#1{#1}\fi
\ifx \bauthor  \undefined \def \bauthor#1{#1}\fi
\ifx \batitle  \undefined \def \batitle#1{#1}\fi
\ifx \bjtitle  \undefined \def \bjtitle#1{#1}\fi
\ifx \bvolume  \undefined \def \bvolume#1{\textbf{#1}}\fi
\ifx \byear  \undefined \def \byear#1{#1}\fi
\ifx \bissue  \undefined \def \bissue#1{#1}\fi
\ifx \bfpage  \undefined \def \bfpage#1{#1}\fi
\ifx \blpage  \undefined \def \blpage #1{#1}\fi
\ifx \burl  \undefined \def \burl#1{\textsf{#1}}\fi
\ifx \doiurl  \undefined \def \doiurl#1{\url{https://doi.org/#1}}\fi
\ifx \betal  \undefined \def \betal{\textit{et al.}}\fi
\ifx \binstitute  \undefined \def \binstitute#1{#1}\fi
\ifx \binstitutionaled  \undefined \def \binstitutionaled#1{#1}\fi
\ifx \bctitle  \undefined \def \bctitle#1{#1}\fi
\ifx \beditor  \undefined \def \beditor#1{#1}\fi
\ifx \bpublisher  \undefined \def \bpublisher#1{#1}\fi
\ifx \bbtitle  \undefined \def \bbtitle#1{#1}\fi
\ifx \bedition  \undefined \def \bedition#1{#1}\fi
\ifx \bseriesno  \undefined \def \bseriesno#1{#1}\fi
\ifx \blocation  \undefined \def \blocation#1{#1}\fi
\ifx \bsertitle  \undefined \def \bsertitle#1{#1}\fi
\ifx \bsnm \undefined \def \bsnm#1{#1}\fi
\ifx \bsuffix \undefined \def \bsuffix#1{#1}\fi
\ifx \bparticle \undefined \def \bparticle#1{#1}\fi
\ifx \barticle \undefined \def \barticle#1{#1}\fi
\bibcommenthead
\ifx \bconfdate \undefined \def \bconfdate #1{#1}\fi
\ifx \botherref \undefined \def \botherref #1{#1}\fi
\ifx \url \undefined \def \url#1{\textsf{#1}}\fi
\ifx \bchapter \undefined \def \bchapter#1{#1}\fi
\ifx \bbook \undefined \def \bbook#1{#1}\fi
\ifx \bcomment \undefined \def \bcomment#1{#1}\fi
\ifx \oauthor \undefined \def \oauthor#1{#1}\fi
\ifx \citeauthoryear \undefined \def \citeauthoryear#1{#1}\fi
\ifx \endbibitem  \undefined \def \endbibitem {}\fi
\ifx \bconflocation  \undefined \def \bconflocation#1{#1}\fi
\ifx \arxivurl  \undefined \def \arxivurl#1{\textsf{#1}}\fi
\csname PreBibitemsHook\endcsname

\bibitem{Dempe2002foundation}
\begin{bbook}
\bauthor{\bsnm{Dempe}, \binits{S.}}:
\bbtitle{Foundations of Bilevel Programming}.
\bsertitle{Nonconvex Optimization and its Applications},
vol. \bseriesno{61},
p. \bfpage{306}.
\bpublisher{Kluwer Academic Publishers},
\blocation{Dordrecht}
(\byear{2002}).
\doiurl{10.1007/b101970}
\end{bbook}
\endbibitem

\bibitem{Dutta2017optimalityofbilevelthroughvariational}
\begin{bchapter}
\bauthor{\bsnm{Dutta}, \binits{J.}}:
\bctitle{Optimality conditions for bilevel programming: an approach through variational analysis}.
In: \bbtitle{Generalized {N}ash Equilibrium Problems, Bilevel Programming and {MPEC}}.
\bsertitle{Forum Interdiscip. Math.},
pp. \bfpage{43}--\blpage{64}.
\bpublisher{Springer},
\blocation{Singapore}
(\byear{2017})
\end{bchapter}
\endbibitem

\bibitem{Zemkohoillposed}
\begin{barticle}
\bauthor{\bsnm{Zemkoho}, \binits{A.B.}}:
\batitle{Solving ill-posed bilevel programs}.
\bjtitle{Set-Valued Var. Anal.}
\bvolume{24}(\bissue{3}),
\bfpage{423}--\blpage{448}
(\byear{2016}).
\doiurl{10.1007/s11228-016-0371-x}
\end{barticle}
\endbibitem

\bibitem{pileckathesis}
\begin{botherref}
\oauthor{\bsnm{Pilecka}, \binits{M.}}:
Set-valued optimization and its application to bilevel optimization.
PhD thesis,
TU Bergakademie Freiberg,
Freiberg, Germany
(2015)
\end{botherref}
\endbibitem

\bibitem{stackelberg1952theory}
\begin{bbook}
\bauthor{\bsnm{Stackelberg}, \binits{H.}}, \betal:
\bbtitle{Theory of the Market Economy}.
\bpublisher{Oxford University Press},
\blocation{Oxford}
(\byear{1952})
\end{bbook}
\endbibitem

\bibitem{Bard}
\begin{bbook}
\bauthor{\bsnm{Bard}, \binits{J.F.}}:
\bbtitle{Practical Bilevel optimization--Algorithms and Applications}.
\bsertitle{Nonconvex Optimization and its Applications},
vol. \bseriesno{30},
p. \bfpage{473}.
\bpublisher{Kluwer Academic Publishers},
\blocation{Dordrecht}
(\byear{1998}).
\doiurl{10.1007/978-1-4757-2836-1}
\end{bbook}
\endbibitem

\bibitem{Bard1988}
\begin{barticle}
\bauthor{\bsnm{Bard}, \binits{J.F.}}:
\batitle{Convex two-level optimization}.
\bjtitle{Math. Programming}
\bvolume{40}(\bissue{1, (Ser. A)}),
\bfpage{15}--\blpage{27}
(\byear{1988}).
\doiurl{10.1007/BF01580720}
\end{barticle}
\endbibitem

\bibitem{dempe1992}
\begin{barticle}
\bauthor{\bsnm{Dempe}, \binits{S.}}:
\batitle{A necessary and a sufficient optimality condition for bilevel programming problems}.
\bjtitle{Optimization}
\bvolume{25}(\bissue{4}),
\bfpage{341}--\blpage{354}
(\byear{1992}).
\doiurl{10.1080/02331939208843831}
\end{barticle}
\endbibitem

\bibitem{dempe2006dutta}
\begin{barticle}
\bauthor{\bsnm{Dempe}, \binits{S.}},
\bauthor{\bsnm{Dutta}, \binits{J.}},
\bauthor{\bsnm{Lohse}, \binits{S.}}:
\batitle{Optimality conditions for bilevel programming problems}.
\bjtitle{Optimization}
\bvolume{55}(\bissue{5-6}),
\bfpage{505}--\blpage{524}
(\byear{2006}).
\doiurl{10.1080/02331930600816189}
\end{barticle}
\endbibitem

\bibitem{Dempe2012bilevelandcomplimentary}
\begin{barticle}
\bauthor{\bsnm{Dempe}, \binits{S.}},
\bauthor{\bsnm{Dutta}, \binits{J.}}:
\batitle{Is bilevel programming a special case of a mathematical program with complementarity constraints?}
\bjtitle{Math. Program.}
\bvolume{131}(\bissue{1-2, Ser. A}),
\bfpage{37}--\blpage{48}
(\byear{2012}).
\doiurl{10.1007/s10107-010-0342-1}
\end{barticle}
\endbibitem

\bibitem{Dempedutta2006bilevelwithconvexlower}
\begin{bchapter}
\bauthor{\bsnm{Dutta}, \binits{J.}},
\bauthor{\bsnm{Dempe}, \binits{S.}}:
\bctitle{Bilevel programming with convex lower level problems}.
In: \bbtitle{Optimization with Multivalued Mappings}.
\bsertitle{Springer Optim. Appl.},
vol. \bseriesno{2},
pp. \bfpage{51}--\blpage{71}.
\bpublisher{Springer},
\blocation{New York}
(\byear{2006}).
\doiurl{10.1007/0-387-34221-4\_3}
\end{bchapter}
\endbibitem

\bibitem{borwein1977multivalued}
\begin{barticle}
\bauthor{\bsnm{Borwein}, \binits{J.}}:
\batitle{Multivalued convexity and optimization: a unified approach to inequality and equality constraints}.
\bjtitle{Math. Programming}
\bvolume{13}(\bissue{2}),
\bfpage{183}--\blpage{199}
(\byear{1977}).
\doiurl{10.1007/BF01584336}
\end{barticle}
\endbibitem

\bibitem{corley1988optimality}
\begin{barticle}
\bauthor{\bsnm{Corley}, \binits{H.W.}}:
\batitle{Optimality conditions for maximizations of set-valued functions}.
\bjtitle{J. Optim. Theory Appl.}
\bvolume{58}(\bissue{1}),
\bfpage{1}--\blpage{10}
(\byear{1988}).
\doiurl{10.1007/BF00939767}
\end{barticle}
\endbibitem

\bibitem{DK4}
\begin{barticle}
\bauthor{\bsnm{Kuroiwa}, \binits{D.}}:
\batitle{Existence theorems of set optimization with set-valued maps}.
\bjtitle{J. Inf. Optim. Sci.}
\bvolume{24}(\bissue{1}),
\bfpage{73}--\blpage{84}
(\byear{2003}).
\doiurl{10.1080/02522667.2003.10699556}
\end{barticle}
\endbibitem

\bibitem{Akhtar}
\begin{bbook}
\bauthor{\bsnm{Khan}, \binits{A.A.}},
\bauthor{\bsnm{Tammer}, \binits{C.}},
\bauthor{\bsnm{Z\u{a}linescu}, \binits{C.}}:
\bbtitle{Set-valued optimization---An Introduction with Applications}.
\bsertitle{Vector Optimization},
p. \bfpage{765}.
\bpublisher{Springer},
\blocation{Heidelberg}
(\byear{2015}).
\doiurl{10.1007/978-3-642-54265-7}
\end{bbook}
\endbibitem

\bibitem{Gofert}
\begin{bbook}
\bauthor{\bsnm{G\"{o}pfert}, \binits{A.}},
\bauthor{\bsnm{Riahi}, \binits{H.}},
\bauthor{\bsnm{Tammer}, \binits{C.}},
\bauthor{\bsnm{Z\u{a}linescu}, \binits{C.}}:
\bbtitle{Variational Methods in Partially Ordered Spaces}.
\bsertitle{CMS Books in Mathematics/Ouvrages de Math\'{e}matiques de la SMC},
vol. \bseriesno{17},
p. \bfpage{350}.
\bpublisher{Springer},
\blocation{New York}
(\byear{2003}).
\doiurl{10.1007/b97568}
\end{bbook}
\endbibitem

\bibitem{corley1987existence}
\begin{barticle}
\bauthor{\bsnm{Corley}, \binits{H.W.}}:
\batitle{Existence and {L}agrangian duality for maximizations of set-valued functions}.
\bjtitle{J. Optim. Theory Appl.}
\bvolume{54}(\bissue{3}),
\bfpage{489}--\blpage{501}
(\byear{1987}).
\doiurl{10.1007/BF00940198}
\end{barticle}
\endbibitem

\bibitem{Luc89}
\begin{bbook}
\bauthor{\bsnm{Luc}, \binits{D.T.}}:
\bbtitle{Theory of Vector Optimization}.
\bsertitle{Lecture Notes in Economics and Mathematical Systems},
vol. \bseriesno{319},
p. \bfpage{173}.
\bpublisher{Springer},
\blocation{Berlin}
(\byear{1989}).
\doiurl{10.1007/978-3-642-50280-4}
\end{bbook}
\endbibitem

\bibitem{DK}
\begin{bchapter}
\bauthor{\bsnm{Kuroiwa}, \binits{D.}}:
\bctitle{On set-valued optimization}.
In: \bbtitle{Proceedings of the {T}hird {W}orld {C}ongress of {N}onlinear {A}nalysts, {P}art 2 ({C}atania, 2000)},
vol. \bseriesno{47},
pp. \bfpage{1395}--\blpage{1400}
(\byear{2001}).
\doiurl{10.1016/S0362-546X(01)00274-7}
\end{bchapter}
\endbibitem

\bibitem{KurTanTru97}
\begin{bchapter}
\bauthor{\bsnm{Kuroiwa}, \binits{D.}},
\bauthor{\bsnm{Tanaka}, \binits{T.}},
\bauthor{\bsnm{Ha}, \binits{T.X.D.}}:
\bctitle{On cone convexity of set-valued maps}.
In: \bbtitle{Proceedings of the {S}econd {W}orld {C}ongress of {N}onlinear {A}nalysts, {P}art 3 ({A}thens, 1996)},
vol. \bseriesno{30},
pp. \bfpage{1487}--\blpage{1496}
(\byear{1997}).
\doiurl{10.1016/S0362-546X(97)00213-7}
\end{bchapter}
\endbibitem

\bibitem{H}
\begin{barticle}
\bauthor{\bsnm{Hamel}, \binits{A.H.}},
\bauthor{\bsnm{L\"{o}hne}, \binits{A.}}:
\batitle{A set optimization approach to zero-sum matrix games with multi-dimensional payoffs}.
\bjtitle{Math. Methods Oper. Res.}
\bvolume{88}(\bissue{3}),
\bfpage{369}--\blpage{397}
(\byear{2018}).
\doiurl{10.1007/s00186-018-0639-z}
\end{barticle}
\endbibitem

\bibitem{mirrlees1999theory}
\begin{barticle}
\bauthor{\bsnm{Mirrlees}, \binits{J.A.}}:
\batitle{The theory of moral hazard and unobservable behaviour: Part {I}}.
\bjtitle{The Review of Economic Studies}
\bvolume{66}(\bissue{1}),
\bfpage{3}--\blpage{21}
(\byear{1999})
\end{barticle}
\endbibitem

\bibitem{Leitmann}
\begin{barticle}
\bauthor{\bsnm{Leitmann}, \binits{G.}}:
\batitle{On generalized {S}tackelberg strategies}.
\bjtitle{J. Optim. Theory Appl.}
\bvolume{26}(\bissue{4}),
\bfpage{637}--\blpage{643}
(\byear{1978}).
\doiurl{10.1007/BF00933155}
\end{barticle}
\endbibitem

\bibitem{Osbornegametheorybook1994}
\begin{bbook}
\bauthor{\bsnm{Osborne}, \binits{M.J.}},
\bauthor{\bsnm{Rubinstein}, \binits{A.}}:
\bbtitle{A Course in Game Theory},
p. \bfpage{352}.
\bpublisher{MIT Press},
\blocation{Cambridge, MA}
(\byear{1994})
\end{bbook}
\endbibitem

\bibitem{BNT}
\begin{bbook}
\bauthor{\bsnm{Ben-Tal}, \binits{A.}},
\bauthor{\bsnm{El~Ghaoui}, \binits{L.}},
\bauthor{\bsnm{Nemirovski}, \binits{A.}}:
\bbtitle{Robust Optimization}.
\bsertitle{Princeton Series in Applied Mathematics},
p. \bfpage{542}.
\bpublisher{Princeton University Press},
\blocation{Princeton, NJ}
(\byear{2009}).
\doiurl{10.1515/9781400831050}
\end{bbook}
\endbibitem

\bibitem{Beck}
\begin{barticle}
\bauthor{\bsnm{Beck}, \binits{A.}},
\bauthor{\bsnm{Ben-Tal}, \binits{A.}}:
\batitle{Duality in robust optimization: primal worst equals dual best}.
\bjtitle{Oper. Res. Lett.}
\bvolume{37}(\bissue{1}),
\bfpage{1}--\blpage{6}
(\byear{2009}).
\doiurl{10.1016/j.orl.2008.09.010}
\end{barticle}
\endbibitem

\bibitem{Klamroth17}
\begin{barticle}
\bauthor{\bsnm{Klamroth}, \binits{K.}},
\bauthor{\bsnm{K\"{o}bis}, \binits{E.}},
\bauthor{\bsnm{Sch\"{o}bel}, \binits{A.}},
\bauthor{\bsnm{Tammer}, \binits{C.}}:
\batitle{A unified approach to uncertain optimization}.
\bjtitle{European J. Oper. Res.}
\bvolume{260}(\bissue{2}),
\bfpage{403}--\blpage{420}
(\byear{2017}).
\doiurl{10.1016/j.ejor.2016.12.045}
\end{barticle}
\endbibitem

\bibitem{mehlitz2017contributions}
\begin{botherref}
\oauthor{\bsnm{Mehlitz}, \binits{P.}}:
Contributions to complementarity and bilevel programming in \uppercase{B}anach spaces.
PhD thesis,
TU Bergakademie Freiberg,
Freiberg, Germany
(2017)
\end{botherref}
\endbibitem

\bibitem{borwein1977proper}
\begin{barticle}
\bauthor{\bsnm{Borwein}, \binits{J.}}:
\batitle{Proper efficient points for maximizations with respect to cones}.
\bjtitle{SIAM J. Control Optim.}
\bvolume{15}(\bissue{1}),
\bfpage{57}--\blpage{63}
(\byear{1977}).
\doiurl{10.1137/0315004}
\end{barticle}
\endbibitem

\bibitem{borwein1981lagrange}
\begin{barticle}
\bauthor{\bsnm{Borwein}, \binits{J.M.}}:
\batitle{A {L}agrange multiplier theorem and a sandwich theorem for convex relations}.
\bjtitle{Math. Scand.}
\bvolume{48}(\bissue{2}),
\bfpage{189}--\blpage{204}
(\byear{1981}).
\doiurl{10.7146/math.scand.a-11911}
\end{barticle}
\endbibitem

\bibitem{postolm1986vectorial}
\begin{barticle}
\bauthor{\bsnm{Postolic\u{a}}, \binits{V.}}:
\batitle{Vectorial optimization programs with multifunctions and duality}.
\bjtitle{Ann. Sci. Math. Qu\'{e}bec}
\bvolume{10}(\bissue{1}),
\bfpage{85}--\blpage{102}
(\byear{1986})
\end{barticle}
\endbibitem

\bibitem{DK1}
\begin{barticle}
\bauthor{\bsnm{Kuroiwa}, \binits{D.}}:
\batitle{On natural criteria in set-valued optimization}.
\bjtitle{RIMS Kokyuroku}
\bvolume{1048},
\bfpage{86}--\blpage{92}
(\byear{1998}).
\bcomment{Dynamic decision systems in uncertain environments (Japanese) (Kyoto, 1998)}
\end{barticle}
\endbibitem

\bibitem{berge1997topological}
\begin{bbook}
\bauthor{\bsnm{Berge}, \binits{C.}}:
\bbtitle{Topological spaces---Including a Treatment of Multi-valued Functions, Vector Spaces and Convexity},
p. \bfpage{270}.
\bpublisher{Dover Publications, Inc.},
\blocation{Mineola, NY}
(\byear{1997}).
\bcomment{Translated from the French original by E. M. Patterson, Reprint of the 1963 translation}
\end{bbook}
\endbibitem

\bibitem{dempe2015book}
\begin{bbook}
\bauthor{\bsnm{Dempe}, \binits{S.}},
\bauthor{\bsnm{Kalashnikov}, \binits{V.}},
\bauthor{\bsnm{P\'{e}rez-Vald\'{e}s}, \binits{G.A.}},
\bauthor{\bsnm{Kalashnykova}, \binits{N.}}:
\bbtitle{Bilevel Programming problems--Theory, Algorithms and Applications to Energy Networks}.
\bsertitle{Energy Systems},
p. \bfpage{325}.
\bpublisher{Springer},
\blocation{Heidelberg}
(\byear{2015}).
\doiurl{10.1007/978-3-662-45827-3}
\end{bbook}
\endbibitem

\bibitem{Ausubel1993generalizedmaximumtheorem}
\begin{barticle}
\bauthor{\bsnm{Ausubel}, \binits{L.M.}},
\bauthor{\bsnm{Deneckere}, \binits{R.J.}}:
\batitle{A generalized theorem of the maximum}.
\bjtitle{Econom. Theory}
\bvolume{3}(\bissue{1}),
\bfpage{99}--\blpage{107}
(\byear{1993}).
\doiurl{10.1007/BF01213694}
\end{barticle}
\endbibitem

\end{thebibliography}


\end{document}